\documentclass[a4paper,11pt]{article}

\usepackage{amsmath}
\usepackage{amsthm}
\usepackage{verbatim}

\usepackage[top=3.5cm, bottom=3.5cm, left=2.5cm, right=2.5cm]{geometry}

\usepackage[utf8]{inputenc}
\usepackage[T1]{fontenc}
\usepackage[french,english]{babel} 
\usepackage{enumerate}

\usepackage{xcolor}
\usepackage[]{pdfpages}
\usepackage{command-estim}
\usepackage{titling}
\usepackage[colorlinks=false,linkcolor=blue]{hyperref}
\usepackage{csquotes}
\usepackage[nottoc, notlof, notlot, numbib]{tocbibind}
\usepackage[style=alphabetic,giveninits,doi=false,isbn=false,url=false,eprint=false,clearlang=true,sorting=nyt,maxbibnames=99]{biblatex}
\addbibresource{hjb-biblio.bib}
\DeclareSourcemap{
      \maps[datatype=bibtex]{
      \map[overwrite=false]{
       \step[fieldsource=note]
       \step[fieldset=addendum, origfieldval, final]
       \step[fieldset=note, null]
       }
   }
   }

\usepackage{mathtools}
\usepackage{thmtools}
\usepackage{bbm}
\usepackage{amssymb}
\usepackage{mathrsfs}
\usepackage{float}
\usepackage{stmaryrd}

\usepackage{setspace}
\usepackage{array}
\usepackage{tabularx}
\usepackage{ltablex} 

\usepackage{todonotes}
\usepackage{mathtools}
\mathtoolsset{showonlyrefs}

\newcommand{\normalcone}[2]{{\mathcal N}_{#1}(#2)}

\usepackage{authblk}

\title{\bf Constrained non-linear estimation and links with stochastic filtering}

\begin{document}

\author[1]{Louis-Pierre \textsc{Chaintron}}
\author[2]{Laurent \textsc{Mertz}}
\author[3]{Philippe \textsc{Moireau}}
\author[4]{Hasnaa \textsc{Zidani}}
\affil[1]{\small
DMA, École normale supérieure, Université PSL, CNRS, 75005 Paris, France}
\affil[2]{\small
Department of Mathematics, City University of Hong Kong, Kowloon, Hong Kong, China \& City University of Hong Kong Shenzhen Research Institute, Shenzhen, China
}
\affil[3]{\small
Ecole Polytechnique, 91128 Palaiseau, France
}
\affil[4]{\small
INSA Rouen Normandie, 76800, Saint-Étienne-du-Rouvray, France
}

\date{}

\maketitle

\abstract{This article studies the problem of estimating the state variable of non-smooth sub-differential dynamics constrained in a bounded convex domain given some real-time observation. On the one hand, we show that the value function of the estimation problem is a viscosity solution of a Hamilton Jacobi Bellman equation whose sub and super solutions have different Neumann type boundary conditions. This intricacy arises from the non-reversibility in time of the non-smooth dynamics,  
and hinders the derivation of a comparison principle and the uniqueness of the solution in general. Nonetheless, we identify conditions on the drift (including zero drift) coefficient in the non-smooth dynamics that make such a derivation possible. On the other hand, we show in a general situation that the value function appears in the small noise limit of the corresponding stochastic filtering problem by establishing a large deviation result. We also give quantitative approximation results when replacing the non-smooth dynamics with a smooth penalised one.}

\tableofcontents

\section{Introduction} \label{sec:intro}

Sequential estimation aims to combine a dynamical system with some measurements as they become available, to reduce potential uncertainties in the dynamics and thus produce a model prediction that is more consistent with available data. 
Such a goal can be pursued for a wide range of dynamical systems: finite dimensional (ODE) or infinite dimensional (PDE), linear or non-linear dynamics, unconstrained (smooth dynamics) or constrained (non-smooth dynamics combined with variational inequality), deterministic (observer theory) or stochastic (filtering theory). 
In the present paper, we focus on constrained dynamics in finite dimension, and we link the framework of stochastic filtering with the deterministic framework aiming at defining observer dynamics. 
Stochastic filtering for unconstrained ODEs has been known since the 1960s with the seminal work of \cite{kalmanbucy1961} for linear dynamics, and then generalized for nonlinear dynamics, see \cite{kushner1967dynamical,duncan1967probability,zakai1969optimal,jazwinski1970stochastic}...
These results were then extended to some constrained dynamics, in particular for the Skorokhod problem with the series of works by \cite{pardoux1978filtrage,pardoux1978stochastic,hucke1990nonlinear}. 
As for the deterministic view, the observer theory based on Minimum Energy Estimation has been known since the pioneering work of \cite{mortensen1968maximum}, see also the presentation proposed by \cite{fleming1997deterministic}.
While the unconstrained case has been well understood since \cite{barasjames}, with an asymptotic connection to stochastic filtering introduced in \cite{HijabAoP} and further justified in \cite{fleming1997deterministic}, the case of constrained dynamics was not studied until a recent attempt \cite{chaintron2023mortensen} for a simple one-dimensional dynamics. 
The constraint is there introduced using the formalism of non-smooth sub-differential dynamics \cite{moreau1971rafle,tyrrell1970convex}.
A main difficulty of this setting is the loss of time reversibility: in contrast with smooth dynamics, the non-smooth dynamics is well-posed in forward time only, making the connection harder between stochastic filtering and deterministic estimation.
However, \cite{chaintron2023mortensen} was able to make a few strides to reconcile both points of view. 
 
In this paper, we generalize the works of \cite{willems2004deterministic,barasjames,chaintron2023mortensen} to non-smooth dynamics associated with trajectories that must remain in a bounded domain. 
In particular, we fully connect the deterministic representation to stochastic filtering by extending the results of \cite{barasjames} in a suitable way: we show that the deterministic view corresponds to the small noise limit of the stochastic framework by proving a large deviation result. 
Following \cite{barasjames,fleming1997deterministic},
we rely on a viscosity solution setting to deal with the underlying Hamilton-Jacobi-Bellman (HJB) equations.
However, due to the non-reversibility of non-smooth dynamics, our approach differs from them.
In particular, the sub-solution and the super-solution satisfy different Neumann-type boundary conditions in the viscosity sense, and the comparison principle is unknown to our knowledge.
To circumvent this difficulty, the small noise limit is established using a dual formulation.
We also show how the penalized estimator developed in \cite{chaintron2023mortensen} to deal with the constraint converges to the fully constrained estimator in a quantitative way, and we provide numerical illustrations.

\subsection{Problem statement}
\label{subsec:problem}

Let $\stateSpace \subset \RR^\NstateSpace$ be a bounded open domain that is convex with $C^{2}$ boundary. 
We consider a class of non-smooth dynamical systems of the form
\begin{equation} \label{eq:dynsys}
\dot\state_{\modelNoise} (s) + \partial\model (\state_{\modelNoise} (s)) \ni \modelRHS(s,\state_{\modelNoise} (s)) + \sigma(s,\state_{\modelNoise} (s)) \modelNoise(s),\quad s>0,
\end{equation}
with an initial condition $\state_{\modelNoise} (0) \in \overline{\stateSpace}$.

Here, the state variable at time $s$ is denoted $\state_{\modelNoise} (s) \in \mathbb{R}^n$, while the functions $b : \RR_+ \times \overline\stateSpace \to \RR^{\NstateSpace}$ and $\sigma: \RR_+ \times \overline\stateSpace \to \RR^{\NstateSpace \times \NmodelNoiseSpace}$ are Lipschitz-continuous functions. Thus, these functions can be extended in a Lipschitz-continuous manner to the entire space, and we use the same notation for the extensions. The term $\modelNoise(s) \in \RR^{\NmodelNoiseSpace}$ represents the state disturbance, 
 $\model : \RR^{\NstateSpace} \to \{ 0, +\infty \}$ is the  characteristic function of the domain $\stateSpace$, and 
$\partial\model (\state_{\modelNoise} (s))$ is the  subdifferential of the convex function $\model$ at $\state_{\modelNoise} (s)$. Recall that this subdifferential corresponds to the normal cone $\normalcone{\overline{\stateSpace}}{\state_{\modelNoise} (s)}$
 to $\overline{\stateSpace}$ at $\state_{\modelNoise} (s)$. 
This means that when the state $\state_{\modelNoise} (s)$ reaches the boundary of $\stateSpace$, the dynamics are reflected, ensuring that $\state_{\modelNoise} (s)$ remains within $\overline{\stateSpace}$. In particular, the dynamics are driven by the vector field $\modelRHS(s,\state_{\modelNoise} (s)) + \sigma(s,\state_{\modelNoise} (s)) \modelNoise(s)$, but whenever $\state_{\modelNoise} (s)$ approaches the boundary of $\stateSpace$, the normal cone prevents it from leaving the domain, reflecting the trajectory back into the domain in the sense of Skorokhod.

In the sequel, we assume that the disturbance belongs to the Lebesgue space $L^2(0,+\infty; \R^r)$ that consists of all measurable functions 
$w:(0,+\infty) \longrightarrow \R^r$ that are square integrable, i.e. the norm
$$
\|w\|_{L^2(0,+\infty; \R^r)}:=
	\left(\int_{0}^{+\infty}\|w(s)\|^2\,ds\right)^{\frac12}, 
$$	
is finite, and where functions which agree almost everywhere are identified.

The controlled system is well posed. Indeed, following similar arguments as in  \cite[Theorem~1]{EdmondThibault2005}, one can prove that for any square-integrable disturbances $\modelNoise$, for  any initial data $x_\omega(0)\in\overline{G}$, the system \eqref{eq:dynsys}  admits a unique absolutely continuous 
solution $x_\omega$ 
such that  the differential inclusion \eqref{eq:dynsys} holds for almost every $s \geq 0.$

We consider a measurement procedure $\obsMap : \RR_+ \times \stateSpace \rightarrow \RR^m $, so that observations associated with a trajectory of the dynamics \eqref{eq:dynsys} are given by
\begin{equation}
\label{eq:obs}
	\forall t \geq 0, \quad \observ(t) = \obsMap(t,\state_\modelNoise(t)) + \obsNoise(t),
\end{equation}
where $\obsNoise(t) \in \RR^m$ is the observation \textit{disturbance}, which is assumed to be square-integrable. 
Our purpose is to build an estimator at time $t$ for the state of a dynamics described by \eqref{eq:dynsys} given the measurement $( \observ(s) )_{0 \leq s \leq t}$ produced by \eqref{eq:obs}. 
We want this estimator to be \emph{causal} in the sense of \cite{Krener:1998ve}, meaning that the computed state only depends on the past measurements.

\begin{remark}
	In most deterministic observation problems, the observation is usually denoted by $y$. 
    Here, we denote by $y$ a primitive of the observation
	for consistency with the stochastic filtering setting. 
    As in practice, only $\observ$ will appear in the estimator equations, and defining the observation with a time-derivative is a mere notation convention.
\end{remark}

An iconic case of non-smooth dynamics in an unbounded domain is $\stateSpace = \RR_+$, studied in the context of estimation in \cite{chaintron2023mortensen}.

Before describing our results, we briefly review some existing methods in linear and non-linear estimation.
\subsection{Unconstrained linear dynamics: Kalman filter} \label{subsec:IntroKalman}

In the linear case, the system is characterized by the following dynamics:
\[
b(t,x) = A x, \quad \sigma(t,x) = \Sigma, \quad h(t,x) = H x, \quad \psi(x) = \frac{1}{2} [x - \hat{x}_0]^\top P_0^{-1} [x - \hat{x}_0],
\]
where $A \in \R^{n \times n}$, $\Sigma \in \R^{n \times r}$, $H \in \R^{m \times n}$, $P_0 \in S^{++}_n (\R)$ is a positive-definite matrix, and $\hat{x}_0 \in \R^n$. The unconstrained linear dynamics, with a linear observation operator, serves as a classical example for defining a sequential estimator in both deterministic and stochastic systems.
This was first introduced by \cite{kalmanbucy1961}, and a full treatment of stochastic filtering in this case can be found in \cite{jazwinski1970stochastic,davis1977linear}. For a deterministic perspective, we refer to \cite{willems2004deterministic}. Essentially, both approaches lead to the definition of the estimator given by
\begin{equation}\label{eq:kalman}
\begin{cases}
    \dot{\hat{x}}(t) = A \hat{x}(t) + P(t) H^\top [ \dot{y}(t) - H \hat{x}(t) ], \quad t > 0, \\
    \hat{x}(0) = \hat{x}_0,
\end{cases}
\end{equation}
where the symmetric positive matrix $P \in S_\NstateSpace^+(\R)$ is a solution of the following Riccati equation
\begin{equation}\label{eq:Riccati}
\begin{cases}
\dot{P}(t) = A P(t) + P(t) A^\top + \Sigma \Sigma^\top - P (t)^\top H H^\top P(t), \quad t > 0, \\
P(0) = P_0.
\end{cases}
\end{equation}
Equation \eqref{eq:kalman} provides a recursive estimator that can be computed in real time, the matrix $P(t)$ being pre-computed beforehand.

\subsection{Non-linear dynamics: Mortensen observer} \label{subsec:IntroMortensen}

If we now consider a general non-linear unconstrained dynamics of the form
\begin{equation}
\label{eq:dynsysUncons}
\dot\state_{\modelNoise} (s) = \modelRHS(s,\state_{\modelNoise} (s)) + \sigma(s,\state_{ \modelNoise} (s)) \modelNoise(s),\quad s > 0,
\end{equation}
the Mortensen estimator \cite{mortensen1968maximum} generalises the Kalman estimator in the deterministic setting. 
Given an observation $ \obsVar \in \Ltwo[(0,t);\R^{m} ]$, we introduce a value function called \emph{cost-to-come} as the function defined by 
\begin{equation}
\label{eq:costcomeIntro}
\costcomeIntro (t,x) \triangleq \inf_{\substack{\modelNoise \in L^2((0,t);\RR^r) \\ \state_{\modelNoise}(t) = x}} \initError(x_\modelNoise(0)) + \int_0^t \timeError( s,\state_{\modelNoise}(s),\modelNoise(s) )\, \diff s ,
\end{equation}
where $\psi : \RR^n \rightarrow \R_+$ is Lipschitz and such that $\psi (x) \rightarrow + \infty$ as $\vert x \vert \rightarrow +\infty$, 
\begin{equation}
\label{eq:ell_formula}
\timeError( s,\state,\modelNoise ) \triangleq \frac{1}{2} \lvert \modelNoise \rvert^2 + \frac{1}{2 } \lvert \obsVar(s) - \obsMap(s,\state) \rvert^2.
\end{equation}

The Mortensen observer $\hat{\state}$ is defined as a minimiser of $x \mapsto \costcomeIntro (t,x)$.
If uniqueness holds for this minimiser, we simply define
\begin{equation}\label{eq:mortensen}
\forall t\geq 0, \quad  \hat{\state}(t) \triangleq \argmin_{\state \in \R^\NstateSpace} \costcomeIntro (t,x).
\end{equation}
For a well-posedness result in a non-linear setting, we refer to \cite{breiten2023local}.
If moreover $\costcomeIntro (t,x)$ is $C^2$ at $(t,\hat{\state}(t))$ with invertible Hessian, the optimality conditions for $\hat{\state}(t)$ yield
\begin{equation} \label{eq:mortensenrec}
\dot{\hat{x}}(t) = b(t,\hat{x}(t)) + [ \nabla^2 \costcomeIntro(t,\hat{\state}(t)) ]^{-1} \nabla h(t,\hat{x}(t)) [ \dot{y}(t) - h(t, \hat{x}(t)) ], \quad t > 0. 
\end{equation} 
This recursive feed-back structure extends the one of the Kalman estimator.
Indeed, \eqref{eq:mortensenrec} precisely corresponds to \eqref{eq:kalman} in the linear setting of Section \ref{subsec:IntroKalman}.
In this linear setting, the cost-to-come reads
\[ \costcomeIntro(t,x) = \frac{1}2  [x - \hat{x}(t)] \cdot P^{-1}(t) [x - \hat{x}(t)] + \int_0^t \frac{1}2 |\dot{y}(s) - H x(s)|^2 \diff s, \]
enabling us to recover the Kalman estimator.

\subsection{Outline of the article} \label{subsec:IntroOutline}  

In this paper, we aim at adapting Mortensen's approach to the framework of non-smooth dynamics as described in \eqref{eq:dynsys}. In this setting, the normal cone to $\overline{G}$ is involved, and the system dynamics belongs to the class of problems known as sweeping dynamics. This class has attracted considerable interest in recent years within the optimal control community, as it models systems with state constraints that evolve along trajectories governed by set-valued dynamics, see for example 
\cite{Colomboetal} and the references therein. Notable contributions to the study of sweeping dynamics include works on the Pontryagin Maximum Principle (PMP), such as those in \cite{Mordukovich, de2019optimal, Zeidan}, and on the characterization of the value function through HJB approach, as investigated in \cite{Herm2024} and the references therein.

Despite this progress, a major challenge in this domain arises from the irreversible nature of sweeping process trajectories. Unlike classical dynamical systems, these trajectories do not allow for direct and backward time evolutions to be treated symmetrically. This temporal asymmetry complicates both the theoretical analysis and the practical computation of solutions, particularly in the context of backward reachability or filtering problems.

To the best of our knowledge, the analysis of sweeping processes within the context of filtering has yet to be fully explored. By extending Mortensen's framework to such non-smooth dynamics, we aim to address this gap and provide some new insights into the interplay between set-valued dynamics and optimal filtering techniques.

The paper is organized as follows. The cost-to-come function for the constrained setting is defined in Section \ref{subsec:ResPen} and characterised as a viscosity solution with intricate boundary conditions.
An approximation procedure is further introduced to bring the problem back to the setting of Section \ref{subsec:IntroMortensen}, the sub-differential being replaced by a penalisation term that pulls the dynamics back in $\overline{G}$ when it escapes. 
A quantitative rate of convergence is obtained, the proof being done in Section \ref{subsec:CVpen}.
We then describe the link with non-smooth stochastic filtering.
Existing links between stochastic filtering and deterministic estimation are recalled in Sections \ref{subsubsec:KalmanSto}-\ref{subsubsec:BarasJames}.
Our results for the non-smooth setting are stated in Section \ref{subsubsec:SmallReflected} and proved in Section \ref{sec:Limitfiltering}.
A numerical illustration is given in the supplementary materials.

\section{Description of the results} \label{sec:results}

Throughout the paper, we assume $\sigma(x)^\top \sigma(x)$ to be \emph{uniformly invertible}, meaning that there exists a constant $\gamma_0 > 0$ such that for every $x \in \overline{G}$, the following condition holds:
\begin{equation}\label{eq.inversion_sigma}
\sigma(x)^\top \sigma(x) \geq \gamma_0 \, I_d,
\end{equation}
where $I_d$ denotes the identity matrix of appropriate dimension. This condition ensures that the matrix $\sigma(x)^\top \sigma(x)$ is uniformly positive definite, implying that $\sigma(x)$ has full column rank for all $x \in \overline{G}$.
\medskip

Following \cite{James:1988vc,rockafellar2000convexity}, we introduce the following value function. 

\begin{definition}[Cost-to-come]
The \emph{cost-to-come} to the point $\state \in \overline{G}$ at time $t \geq 0$, given an observation $\obsVar \in \Ltwo[(0,t);\RR^m]$, is the function defined by 
\begin{equation}
\label{eq:costcome}
\costcome(t,x) \triangleq \inf_{(x_\modelNoise(0), \modelNoise) \in \mathcal{A}^\stateSpace_{t,x}} \initError(x_\modelNoise(0)) + \int_0^t \timeError(s,\state_{\modelNoise}(s),\modelNoise(s))\, \diff s,
\end{equation}
where $\timeError(s,\state,\modelNoise )$ is defined as in \eqref{eq:ell_formula}
and the admissible set is defined by
\[ \mathcal{A}^\stateSpace_{t, x} \triangleq \{ (x_\modelNoise(0), \modelNoise) \in \overline{G} \times \Ltwo[(0,t);\R^r] : \state_{\modelNoise} \text{ follows } \eqref{eq:dynsys} \text{ with } \state_{\modelNoise}(t) = \state \}. \]
\end{definition}

The uniform invertibility of $\sigma^\top \sigma$, as stated in assumption \eqref{eq.inversion_sigma},  is a sufficient condition for $\mathcal{A}_{t,x}$ to be non-empty, because the convexity of $\overline{G}$ allows connecting any pair of points in $\overline{G}$ by a straight line without escaping $\overline{G}$. Note also that for any $(x_\modelNoise(0), \modelNoise)\in \mathcal{A}^\stateSpace_{t, x}$,  there exists a unique trajectory $x_\modelNoise$ that is absolutely continuous on $[0,T]$ for every 
$T>0$ (and by definition of $\mathcal{A}^\stateSpace_{t, x}$, $\state_{\modelNoise}(t) = \state$).

Before studying $\costcome$ itself, we describe an approximating procedure that brings the problem back to smooth unconstrained dynamics. The continuity of $\costcome$ will be a consequence of this approximation as shown in Theorem \ref{thm:CVCostpen} below.

\begin{remark}[Inward pointing drift] \label{rem:inward}
Let us assume that $b (x) \cdot n(x) \leq 0$ for every $x \in \partial G$, $n (x)$ being the outward normal vector at $x$.
For simplicity, we further assume that $n = r$ 
and $\sigma \equiv \mathrm{Id}$.
In this setting, the control problem \eqref{eq:costcome} of the differential inclusion \eqref{eq:dynsys} reduces to a standard control problem for a differential equation under state constraints. 
Indeed, introducing the solution 
$z_\omega$ of 
\begin{equation} \label{eq:eqNormal}
\dot{z}_\omega (s) = b (s, {z}_\omega (s) ) + \omega (s),  
\end{equation}
we notice that
\begin{equation*}
\costcome(t,x) = \inf_{\substack{\modelNoise, \, z_\omega (t) = x \\
\forall s \in [0,t], \, z_\omega (s) \in \overline{G}}}  \initError(z_\modelNoise(0)) + \int_0^t \timeError(s,z_{\modelNoise}(s),\modelNoise(s))\, \diff s.
\end{equation*}
The $\leq$ inequality stems from the fact that any trajectory of \eqref{eq:eqNormal} that stays in $\overline{G}$ is a trajectory of \eqref{eq:dynsys}. 
The $\geq$ inequality results from the fact that any trajectory of \eqref{eq:dynsys} can be realised by a trajectory of $\eqref{eq:eqNormal}$ for a $\omega$ that has a lower $L^2$-norm, using that $b \cdot n \leq 0$.
\end{remark}

\subsection{Penalisation approach} \label{subsec:ResPen}

In this section, we introduce an estimation problem for an approximation of \eqref{eq:dynsys} defined in the whole space $\RR^\NstateSpace$.
With a slight abuse of notations, we assume that our coefficients $b$, $\sigma$, $\psi$ and $h$ are defined and Lipschitz-continuous on $\RR^\NstateSpace$.
Let $\proj$ denote the orthogonal projection on the closed convex set $\overline\stateSpace$.
For $\penal > 0$, we introduce the penalisation $\regpen : \stateSpace \rightarrow \R^\NstateSpace$ of $\partial\model$ defined by $\regpen (x) \triangleq \penal [ x - \proj (x) ]$.
Since $G$ is convex, we can follow the Moreau-Yosida regularisation \cite{moreau1971rafle}, which was extended to more general domains in \cite{thibault2008regularization,jourani2017moreau}.
We also refer to \cite{de2019optimal} for an alternative smooth exponential penalization method. This approach has been successfully employed to derive optimality conditions for control problems involving sweeping processes.
In our context, we  introduce the penalised dynamics as follows

\begin{equation}
\label{eq:dynsyspen}
\dot\state^\kappa_{\modelNoise} (t) + \regpen( \state^\kappa_{\modelNoise} (t) ) = \modelRHS(t,\state^\kappa_{\modelNoise} (t)) + \sigma(t,\state^\kappa_{\modelNoise} (t)) \modelNoise(t),\quad t > 0.
\end{equation}
The penalised cost-to-come is defined as
\begin{equation}
\label{eq:costcomepen}
\costcomepen (t,x) \triangleq \inf_{\substack{
\modelNoise \in L^2 (0,t), \\
x^\kappa_\omega (t) = x
}} \initError(x^\kappa_\modelNoise(0)) + \int_0^t \timeError( s,\state^\kappa_{\modelNoise}(s),\modelNoise(s) )\, \diff s.
\end{equation}
By standard arguments in control theory \cite[Section II.10]{fleming2006controlled}, $\costcomepen$ is a continuous function.
Our first main results are the following.

\begin{theorem}[Quantitative convergence of $\costcome^\kappa$] \label{thm:CVCostpen}
For every $t > 0$,
\[ \sup_{(s,x) \in [0,t] \times \overline\stateSpace} \lvert \costcomepen (s,x) - \costcome (s,x) \rvert \leq C \kappa^{-1/4}, \]
for some constant $C > 0$ that only depends on $t$.    
\end{theorem}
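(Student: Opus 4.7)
The plan is to establish the two-sided bound by a single recipe: pick an $\varepsilon$-optimal disturbance $\omega$ for one of the two problems, replay it in the other dynamics, control the resulting trajectory gap, and repair the terminal condition at an $O(\kappa^{-1/4})$ extra cost. A preliminary a priori estimate will be needed: since $\overline{G}$ is bounded and convex, $\psi$ is Lipschitz, and $\sigma\sigma^\top \geq \gamma_0 I_d$ permits straight-line steering by an $L^2$-control, one gets $\costcome,\costcomepen \leq C(t)$ uniformly on $[0,t]\times\overline{G}$, and hence $\|\omega\|_{L^2}^2 \leq 2[C(t)+\varepsilon]$ for every $\varepsilon$-minimiser. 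All subsequent estimates can thus be made on a common bounded class of disturbances.

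The central technical step is a pathwise comparison, for a common $L^2$-bounded $\omega$ and a common initial datum $x_0 \in \overline{G}$, between $x_\omega$ and $x^\kappa_\omega$. I would first differentiate $d^2(x^\kappa_\omega(s))$, where $d(\cdot) := \mathrm{dist}(\cdot,\overline{G})$; using that $\kappa(x^\kappa_\omega - \proj(x^\kappa_\omega))\cdot\nabla d(x^\kappa_\omega) = \kappa d(x^\kappa_\omega)$ outside $\overline{G}$ together with Young's inequality and Gronwall yields $\sup_s d(x^\kappa_\omega(s))^2 \leq C\kappa^{-1}$. Next, setting $E(s) := \tfrac12|x_\omega(s)-x^\kappa_\omega(s)|^2$, two monotonicity properties combine: the projection variational inequality at $x_\omega \in \overline{G}$ provides $\langle x_\omega - x^\kappa_\omega,\kappa(x^\kappa_\omega - \proj(x^\kappa_\omega))\rangle \leq -\kappa d^2(x^\kappa_\omega)$, and the normal cone inequality for $\eta \in \normalcone{\overline{G}}{x_\omega}$ evaluated at $\proj(x^\kappa_\omega) \in \overline{G}$ yields $-\langle x_\omega - x^\kappa_\omega,\eta\rangle \leq |\eta|\,d(x^\kappa_\omega)$. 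Combined with the Lipschitz bounds on $b,\sigma$, the $L^1$-integrability of $|\eta|$ (standard for absolutely continuous sweeping solutions driven by $L^2$ disturbances), and Gronwall's lemma, this yields $E(s) \leq C\kappa^{-1/2}$, hence
\[ \sup_{s \in [0,t]} |x_\omega(s) - x^\kappa_\omega(s)| \leq C \kappa^{-1/4}. \]

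The terminal matching must be handled asymmetrically, because \eqref{eq:dynsyspen} is a classical ODE reversible in time whereas \eqref{eq:dynsys} is irreversible. For the inequality $\costcomepen \leq \costcome + O(\kappa^{-1/4})$, I would pick an $\varepsilon$-optimal $\omega$ for $\costcome$ with $x_\omega(t)=x$, integrate \eqref{eq:dynsyspen} backward from the target $x$ under the same $\omega$, and set $\tilde x_0$ to be the resulting initial point; a Gronwall estimate then gives $|\tilde x_0 - x_\omega(0)| = O(\kappa^{-1/4})$ and $\sup_s|x^\kappa_{\tilde\omega}(s) - x_\omega(s)| = O(\kappa^{-1/4})$, which together with the Lipschitz continuity of $\psi$ and $h$ and with $y \in L^2$ gives the desired cost comparison. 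For the reverse inequality, backward integration of \eqref{eq:dynsys} is not available; I would instead keep the same initial datum and the same $\omega$, observe that the mismatch $e := x_\omega(t) - x$ has $|e| \leq C \kappa^{-1/4}$, and replace $\omega$ on a short final window $[t-\delta,t]$ by a control that drives the constrained trajectory along the segment from $x_\omega(t-\delta)$ to $x$; convexity of $\overline{G}$ guarantees this segment stays in $\overline{G}$ so no reflection occurs, and the extra $L^2$-cost is $O(|e|^2/\delta + \delta)$, balanced at $\delta = \kappa^{-1/4}$ to $O(\kappa^{-1/4})$.

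The rate-limiting step is the trajectory comparison of the second paragraph: the normal-cone error $|\eta|\,d(x^\kappa_\omega)$ is only controlled in $L^1$-in-time by $\kappa^{-1/2}$, which halves the exponent after the Gronwall step for $E$ and is the very origin of the $\kappa^{-1/4}$ rate appearing in the statement. The need for two different terminal-matching arguments, forced by the irreversibility of the sweeping process, is a secondary technical difficulty handled by the short-window controllability argument just sketched.
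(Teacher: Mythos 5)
Your overall architecture coincides with the paper's: a uniform bound on the value functions that restricts attention to disturbances with $\lVert\modelNoise\rVert_{L^2}^2\le 2M$, a pathwise comparison $\sup_{s}\lvert x^\penal_\modelNoise(s)-x_\modelNoise(s)\rvert\le C\kappa^{-1/4}$ for a common pair $(x_\modelNoise(0),\modelNoise)$, and substitution into the two variational formulas. Your trajectory comparison via the monotonicity of $\regpen$ and of the normal cone, combined with $\mathrm{dist}(x^\penal_\modelNoise(s),\overline\stateSpace)\le C\kappa^{-1/2}$ and the $L^1$ bound on the reflection term, is a correct and self-contained alternative to the paper's argument, which instead invokes Tanaka's lemma for the Skorokhod problem; both routes produce the $\kappa^{-1/4}$ rate for the same reason (the distance bound $\kappa^{-1/2}$ enters linearly against the reflection measure, and a square root is lost when passing from $\lvert x_\modelNoise-x^\penal_\modelNoise\rvert^2$ to the distance itself). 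You are also right --- and more explicit than the paper, whose proof of the theorem is silent on this point --- that the terminal constraint $x(t)=x$ must be repaired after the comparison, and your short-window steering argument for the direction $\costcome\le\costcomepen+O(\kappa^{-1/4})$ is sound.

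The gap is in the other direction. Integrating \eqref{eq:dynsyspen} backward in time from the target $x$ is not stable uniformly in $\kappa$: reversed in time, $-\regpen$ becomes expansive with one-sided Lipschitz constant $\kappa$, so no $\kappa$-independent Gronwall estimate is available; more fundamentally, the backward flow forgets the time the constrained trajectory spends stuck on $\partial\stateSpace$. Concretely, take $n=r=1$, $\stateSpace=(0,1)$, $b\equiv 1$, $\sigma\equiv 1$, $\modelNoise\equiv 0$, $x_\modelNoise(0)=1/2$: the constrained trajectory reaches $1$ at time $1/2$ and remains there (the reflection cancels the drift), so $x_\modelNoise(1)=1$; integrating the penalised ODE backward from $x=1$ immediately re-enters $\stateSpace$, where $\regpen=0$, and yields $\tilde x_0=0$, so $\lvert \tilde x_0-x_\modelNoise(0)\rvert=1/2$ for every $\kappa$. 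Hence the claimed estimates $\lvert\tilde x_0-x_\modelNoise(0)\rvert=O(\kappa^{-1/4})$ and $\sup_s\lvert x^\penal_{\modelNoise}(s)-x_\modelNoise(s)\rvert=O(\kappa^{-1/4})$ for the backward-integrated curve are false, and the cost comparison in that direction does not follow. The natural repair is to treat this direction symmetrically to the other one: run the penalised dynamics forward from $x_\modelNoise(0)$ with the same $\modelNoise$, note that $\lvert x^\penal_\modelNoise(t)-x\rvert\le C\kappa^{-1/4}$, and steer back to $x$ on a final window of length $\delta=\kappa^{-1/4}$. The only additional care is that the penalised state may sit at distance $O(\kappa^{-1/2})$ outside $\overline\stateSpace$, where $\lvert\regpen\rvert=O(\kappa^{1/2})$ is too large to be absorbed into an $L^2$ control over the whole window; one should first let the penalisation relax the state to an $O(\kappa^{-1})$ neighbourhood of $\overline\stateSpace$ (at zero control cost, in time $O(\kappa^{-1}\log\kappa)$) before following the segment to $x$.
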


\begin{corollary}[Convergence of observers] \label{cor:GammaCV}
For every $t > 0$, any limit point of a family $( \hat{x}^\penal (t))_{\penal > 0}$ of minimisers for $x \mapsto \costcomepen(t,x)$ is a minimiser of $x \mapsto \costcome(t,x)$.
\end{corollary}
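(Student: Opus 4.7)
The plan is to use Theorem \ref{thm:CVCostpen} together with a standard $\Gamma$-convergence-type argument. The subtlety is that $\costcomepen(t,\cdot)$ is a priori defined on all of $\RR^\NstateSpace$, while $\costcome(t,\cdot)$ only lives on $\overline{G}$, so before invoking the uniform convergence one must first localise the minimisers to $\overline{G}$.

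First I would obtain a uniform cost bound on the family $(\hat{x}^\kappa(t))_{\kappa>0}$. Pick any $x_0 \in \overline{G}$; by optimality,
\[ \costcomepen(t,\hat{x}^\kappa(t)) \leq \costcomepen(t,x_0), \]
and by Theorem \ref{thm:CVCostpen} the right-hand side converges to $\costcome(t,x_0) < \infty$, hence it is bounded by some $M$ uniformly in $\kappa$. Next, I would show that for $x \notin \overline{G}$, the penalised cost must diverge at rate $\kappa$. Given any admissible $(x^\kappa_\omega(0),\omega)$ with $x^\kappa_\omega(t)=x$ and $d(x,\overline{G})=\delta>0$, let $\tau$ be the last time the trajectory visits $\overline{G}$ before $t$ (the case $\tau=0$ being similar). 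Setting $d(s):= d(x^\kappa_\omega(s),\overline{G})$ and differentiating along the flow, one obtains the scalar inequality
\[ \dot{d}(s) \leq -\kappa\, d(s) + C_1 + C_2\lvert \omega(s)\rvert, \qquad s \in [\tau,t], \]
so that Grönwall's inequality, combined with Cauchy–Schwarz, yields
\[ \delta = d(t) \leq \frac{C_1}{\kappa} + \frac{C_2}{\sqrt{2\kappa}}\,\lVert \omega \rVert_{L^2}. \]
This forces $\tfrac12\lVert\omega\rVert_{L^2}^2 \gtrsim \kappa\, \delta^2$ for large $\kappa$, hence $\costcomepen(t,x) \to +\infty$. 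Combined with the uniform bound $\costcomepen(t,\hat{x}^\kappa(t)) \leq M$, this shows $d(\hat{x}^\kappa(t),\overline{G}) \to 0$ as $\kappa \to \infty$. In particular, the family $(\hat{x}^\kappa(t))$ is eventually contained in a bounded neighbourhood of $\overline{G}$, so limit points exist and all belong to $\overline{G}$.

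Finally, given a subsequence $\hat{x}^{\kappa_n}(t) \to \hat{x} \in \overline{G}$, the minimising property transfers to the limit: for any $x \in \overline{G}$,
\[ \costcome(t,\hat{x}) = \lim_n \costcome(t,\hat{x}^{\kappa_n}(t)) = \lim_n \costcomepen[\kappa_n](t,\hat{x}^{\kappa_n}(t)) \leq \lim_n \costcomepen[\kappa_n](t,x) = \costcome(t,x), \]
where the first equality uses continuity of $\costcome(t,\cdot)$ (itself a corollary of the uniform convergence in Theorem \ref{thm:CVCostpen}, since each $\costcomepen$ is continuous), the second and fourth use Theorem \ref{thm:CVCostpen}, and the inequality in the middle is the minimality of $\hat{x}^{\kappa_n}(t)$ for $\costcomepen[\kappa_n](t,\cdot)$.

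The main obstacle is the second step: proving that minimisers cannot stay a fixed positive distance away from $\overline{G}$. This is where the specific structure of the Moreau–Yosida penalisation is needed, through the quantitative $\costcomepen(t,x) \gtrsim \kappa\, d(x,\overline{G})^2$ lower bound derived from the dissipative distance ODE. Once this localisation is in place, the passage to the limit reduces to a textbook $\Gamma$-convergence argument using the uniform convergence rate provided by Theorem \ref{thm:CVCostpen}.
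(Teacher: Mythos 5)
Your argument is correct and follows essentially the same route as the paper's proof: a uniform bound on the optimal cost, localisation of the minimisers near $\overline{\stateSpace}$ (which the paper obtains directly from Lemma \ref{lem:Unifpenbound}, items (i) and (iii), rather than re-deriving the dissipative estimate on $s\mapsto\mathrm{dist}(x^\kappa_\omega(s),\overline{\stateSpace})$ as you do), and then the transfer of minimality via the uniform convergence of Theorem \ref{thm:CVCostpen}. The only slip is that your final chain evaluates $\costcome(t,\cdot)$ at the points $\hat{x}^{\kappa_n}(t)$, which may lie (at distance $O(\kappa_n^{-1/2})$) outside $\overline{\stateSpace}$ where $\costcome$ is not defined and where Theorem \ref{thm:CVCostpen} does not apply; this is repaired by proving the liminf inequality $\costcome(t,\hat{x})\leq\liminf_n \costcomepen(t,\hat{x}^{\kappa_n}(t))$ (with $\kappa=\kappa_n$) through Lemma \ref{lem:pentrajCV}, i.e. by comparing with the constrained trajectory driven by a near-optimal control, whose endpoint lies in $\overline{\stateSpace}$ and converges to $\hat{x}$.
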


Theorem \ref{thm:CVCostpen} and its corollary are proved in Section \ref{subsec:CVpen}.
These consistency results show that the estimation problem for \eqref{eq:dynsyspen} is indeed a good approximation of the estimation problem for \eqref{eq:dynsys}, and provide the convergence of Mortensen observers.

\subsection{Hamilton-Jacobi-Bellman equation} \label{subsec:HJB}
Since our estimation procedure is based on the functions $\costcomepen$ and $\costcome$, we need a way to characterise them.
Using the standard dynamic programming approach, the cost-to-come is the unique viscosity solution of a HJB equation.
The notion of viscosity solution is standard to deal with the lack of regularity of solutions to HJB equations, see \cite{crandall1983viscosity,barles1991fully}.
 For the sake of simplicity, we assume in this subsection that 
 \begin{equation}\label{New.assumption}
 \NstateSpace = \NmodelNoiseSpace\qquad \mbox{and} 
 \qquad \sigma \equiv \mathrm{Id}.
 \end{equation}
The following result corresponds to \cite[Theorem 3.1]{barasjames}. 
It is proved in \cite[Section 2]{fleming1997deterministic}, under the additional assumption that $h$ is bounded.
However, the result still holds under our running assumptions, an appropriate comparison principle for unbounded viscosity solutions being provided by \cite{da2006uniqueness}.

\begin{theorem}[Viscosity solution]
$\costcomepen$ is the unique viscosity solution in $(0,T] \times \R^n$ of the HJB equation
\begin{equation*} 
\partial_t \costcomepen (t,x) + (b(t,x)-f_{\kappa}(x)) \cdot \nabla \costcomepen (t,x) + \frac{1}{2} \vert \nabla \costcomepen (t,x) \vert^2 - \frac{1}{2} \vert \dot{y}(t) - h(t,x) \vert^2 = 0, 
\end{equation*}
with the initial condition $\costcomepen(0,x) = \psi(x)$.
\end{theorem}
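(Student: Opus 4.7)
The plan is to view \eqref{eq:dynsyspen} as a standard unconstrained controlled ODE and to mirror the argument of \cite[Theorem 3.1]{barasjames} / \cite[Section 2]{fleming1997deterministic}, substituting for the bounded comparison principle used there the one for unbounded solutions established in \cite{da2006uniqueness}. Under \eqref{New.assumption}, setting $\tilde{b}(s,x) := b(s,x) - f_\kappa(x)$, which is globally Lipschitz since so are $b$ and $f_\kappa$, the penalised dynamics \eqref{eq:dynsyspen} read $\dot{x}^\kappa_\omega(s) = \tilde{b}(s, x^\kappa_\omega(s)) + \omega(s)$, so that $\costcomepen$ is the value function of a standard minimum-energy estimation problem on the whole space. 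The initial condition $\costcomepen(0,x) = \psi(x)$ follows immediately from \eqref{eq:costcomepen}.

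The first step is the dynamic programming principle: for $0 \leq s \leq t$ and $x \in \R^n$,
$$\costcomepen(t,x) = \inf \Bigl\{ \costcomepen(s, x^\kappa_\omega(s)) + \int_s^t \timeError(u, x^\kappa_\omega(u), \omega(u))\, \mathrm{d}u \Bigr\},$$
the infimum being over admissible $(x^\kappa_\omega(0), \omega)$ with $x^\kappa_\omega(t) = x$. The $\leq$ direction is obtained by concatenating an $\varepsilon$-optimal trajectory for $\costcomepen(s,z)$ with an $\varepsilon$-optimal trajectory of the restricted problem from $z$ to $x$ on $[s,t]$ and minimising over $z$; the reverse direction uses an $\varepsilon$-optimal selection on the left-hand side. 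The continuity of $\costcomepen$ recalled in the excerpt takes care of the associated measurability issues.

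From DPP, the viscosity sub- and super-solution properties follow by the usual test-function argument. Given $\varphi \in C^1$ such that $\costcomepen - \varphi$ attains a local maximum at $(t_0, x_0)$, one inserts $\varphi$ into DPP with an arbitrary constant control $\omega \equiv v$ on the short interval $[t_0 - \delta, t_0]$ and Taylor-expands in $\delta$, which produces
$$\partial_t \varphi(t_0,x_0) + (b - f_\kappa)(t_0,x_0) \cdot \nabla \varphi(t_0,x_0) + v \cdot \nabla \varphi(t_0,x_0) - \tfrac{1}{2} |v|^2 - \tfrac{1}{2}|\dot{y}(t_0) - h(t_0, x_0)|^2 \leq 0.$$
Maximising over $v \in \R^n$ through $\sup_v \{v \cdot p - \tfrac{1}{2}|v|^2\} = \tfrac{1}{2} |p|^2$ yields the subsolution inequality for the HJB equation. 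The supersolution inequality is obtained symmetrically by inserting $\varepsilon$-optimal controls on $[t_0 - \delta, t_0]$.

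The main obstacle is uniqueness. Since $\psi$ is coercive, $\costcomepen$ is unbounded, so Ishii's classical bounded-coefficient comparison principle does not apply directly. A direct a priori analysis of \eqref{eq:costcomepen} gives at most quadratic growth of $\costcomepen$ in $x$ uniformly on bounded time intervals. The Hamiltonian $H(t,x,p) = \tilde{b}(t,x) \cdot p + \tfrac{1}{2}|p|^2 - \tfrac{1}{2}|\dot{y}(t) - h(t,x)|^2$ is convex and quadratic in $p$, Lipschitz in $x$ in its drift part, and carries a source whose $x$-growth is controlled by that of $h^2$ but whose time-dependence is merely $L^2$ through $\dot{y}$. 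The delicate part of the proof is to verify that this configuration satisfies the structural assumptions of the unbounded comparison principle of \cite{da2006uniqueness}; once this is done, uniqueness follows, and the sub/super-solution properties established above identify $\costcomepen$ as the unique viscosity solution.
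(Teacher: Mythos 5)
Your proposal is correct and follows essentially the same route as the paper, which does not prove this theorem itself but defers it to \cite[Theorem 3.1]{barasjames} and \cite[Section 2]{fleming1997deterministic} for the DPP/test-function argument, replacing the bounded comparison principle there by the unbounded one of \cite{da2006uniqueness} exactly as you do. The one step you flag but leave open --- checking the structural hypotheses of \cite{da2006uniqueness} for this quadratic Hamiltonian with coercive $\psi$ and $L^2$-in-time data $\dot y$ --- is likewise left unverified in the paper, so you are at the same level of detail as the source.
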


Formally taking the $\kappa \rightarrow +\infty$ limit, we prove that this equation still holds for $\costcome$ within the open domain $\stateSpace$,
\begin{equation} \label{eq:HJBBulk}
\partial_t \costcome (t,x) + b(t,x) \cdot \nabla \costcome (t,x) + \frac{1}{2} \vert \nabla \costcome (t,x) \vert^2 - \frac{1}{2} \vert \dot{y}(t) - h(t,x) \vert^2 = 0.
\end{equation}
However, intricate boundary conditions now appear. 

\begin{theorem}[Viscosity solution] 
\label{thm:Visco}
The value function $\costcome$ satisfies the HJB equation \eqref{eq:HJBBulk} in the sense described in \cite[Section 2]{barles1991fully},
\begin{itemize}
    \item[(i)] $\costcome$ is a viscosity sub-solution of the HJB equation \eqref{eq:HJBBulk} with the boundary condition
    \begin{equation} \label{eq:BoundSub} 
    b(t,x) \cdot n(x) + \frac{\partial \costcome}{\partial n}(t,x) = 0, \quad x \in \partial\stateSpace. 
    \end{equation}
    \item[(ii)] $\costcome$ is a viscosity super-solution of the HJB equation \eqref{eq:HJBBulk} with the different boundary condition
    \begin{equation} \label{eq:BoundSuper} 
    b(t,x) \cdot n(x) + \frac{1}{2} \frac{\partial \costcome}{\partial n}(t,x) = 0, \quad x \in \partial\stateSpace. 
    \end{equation}
    \item[(iii)] If $b \cdot n \leq 0$ on $\partial \stateSpace$, then $\costcome$ is the unique viscosity solution of the HJB equation \eqref{eq:HJBBulk}-\eqref{eq:BoundSuper}, and the comparison principle holds for \eqref{eq:HJBBulk}-\eqref{eq:BoundSuper}.
\end{itemize}
\end{theorem}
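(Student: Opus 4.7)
The plan is to establish a dynamic programming principle (DPP) for $\costcome$ and then derive the three claimed viscosity properties by specific choices of control. The DPP reads, for $0 < h \leq t$ and $x \in \overline{G}$,
\[
\costcome(t,x) = \inf \Bigl\{ \int_{t-h}^{t} \timeError(s, x_\omega(s), \omega(s))\, \diff s + \costcome(t-h, x_1) \Bigr\},
\]
the infimum being taken over $\omega \in L^2(t-h, t; \RR^n)$ and $x_1 \in \overline{G}$ such that the unique absolutely continuous solution of \eqref{eq:dynsys} with $x_\omega(t-h) = x_1$ reaches $x_\omega(t) = x$. It follows from the well-posedness of \eqref{eq:dynsys} by standard concatenation, while continuity of $\costcome$ is inherited from Theorem~\ref{thm:CVCostpen}. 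In what follows I pass to the limit $h \downarrow 0$ and work with admissible instantaneous backward velocities $v$ at $x_0$.

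For part~(i), suppose $\costcome - \varphi$ has a strict local maximum at $(t_0, x_0)$. Plugging the constant control $\omega \equiv \nabla \varphi(t_0, x_0)$ into the DPP and Taylor-expanding yields, at $(t_0, x_0)$, the inequality $\partial_t \varphi + (b+\omega) \cdot \nabla \varphi - \tfrac{1}{2}|\omega|^2 - \tfrac{1}{2}|\dot y - h|^2 \leq 0$, which on substituting $\omega = \nabla\varphi$ gives the bulk HJB \eqref{eq:HJBBulk} as a sub-solution. This choice is admissible as long as the backward trajectory $x_0 - h(b+\omega) + o(h)$ lies in $\overline{G}$, which for $x_0 \in \partial G$ amounts to $(b+\nabla\varphi) \cdot n(x_0) \geq 0$. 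If this condition holds the HJB sub-solution inequality is obtained; otherwise $b \cdot n + \partial_n \varphi < 0$ at $(t_0,x_0)$, which is precisely \eqref{eq:BoundSub}.

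For part~(ii), let $\costcome - \varphi$ have a local minimum at $(t_0, x_0)$. Near-optimality in the DPP gives
\[
\partial_t \varphi + \sup_{(v,\omega) \in \mathcal{K}(x_0)} \bigl[v \cdot \nabla \varphi - \tfrac{1}{2}|\omega|^2\bigr] - \tfrac{1}{2}|\dot y - h|^2 \geq 0,
\]
where $\mathcal{K}(x_0)$ collects the backward-admissible velocity/control pairs. At $x_0 \in \partial G$, $\mathcal{K}(x_0)$ splits into \emph{Case~A} (backward trajectory enters $G$: $v = b+\omega$ with $(b+\omega)\cdot n \geq 0$) and \emph{Case~B} (backward trajectory slides on $\partial G$: $v$ tangent, $\omega = v - b + \mu n$ with $\mu \geq 0$). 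Direct optimization in each case yields
\begin{align*}
M_A &= b \cdot \nabla \varphi + \tfrac{1}{2}|\nabla \varphi|^2 - \tfrac{1}{2}\bigl[(b \cdot n + \partial_n \varphi)_-\bigr]^2,\\
M_B &= b \cdot \nabla \varphi + \tfrac{1}{2}|\nabla \varphi|^2 - \partial_n \varphi\bigl(b \cdot n + \tfrac{1}{2}\partial_n \varphi\bigr) - \tfrac{1}{2}\bigl[(b \cdot n)_-\bigr]^2,
\end{align*}
from which $M_A - M_B = \tfrac{1}{2}[(b \cdot n + \partial_n \varphi)_+^2 - (b \cdot n)_+^2]$. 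When $\max = M_A$, the super-solution inequality reduces to HJB LHS $\geq \tfrac{1}{2}[(b \cdot n + \partial_n \varphi)_-]^2 \geq 0$. When $\max = M_B$, the identity $2\partial_n \varphi(b \cdot n + \tfrac{1}{2}\partial_n \varphi) + (b \cdot n)_-^2 = (b \cdot n + \partial_n \varphi)^2 - (b \cdot n)_+^2$ together with a short sign analysis shows that either \eqref{eq:BoundSuper} holds or HJB LHS $\geq 0$; in all cases $\max\{\text{HJB LHS}, b \cdot n + \tfrac{1}{2}\partial_n \varphi\} \geq 0$. The $\tfrac{1}{2}$-factor in \eqref{eq:BoundSuper} is a direct remnant of the Case~B computation.

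For part~(iii), under $b \cdot n \leq 0$ one has $(b \cdot n)_+ \equiv 0$, hence $M_A \geq M_B$ pointwise; the derivations above then show that $\costcome$ is simultaneously a sub- and super-solution of \eqref{eq:HJBBulk}--\eqref{eq:BoundSuper}. By Remark~\ref{rem:inward}, the problem reduces to the state-constrained optimal control of $\dot z = b + \omega$ inside $\overline{G}$, a setting in which the comparison principle for HJB with the oblique Neumann condition \eqref{eq:BoundSuper} is proved by Soner's doubling-of-variables method, with a boundary-penalizing test function enforcing the normal derivative and the framework of \cite{da2006uniqueness} to handle the superlinear growth of $\psi$ at infinity. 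The main obstacle throughout is the algebraic case analysis of part~(ii): it is precisely the tension between backward trajectories entering the interior (Case~A) and sliding along the boundary (Case~B) that produces the $\tfrac{1}{2}$-factor and is the source of the asymmetry between the two boundary conditions.
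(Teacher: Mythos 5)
Your overall strategy (dynamic programming principle plus test functions, with the sub-solution obtained by steering with $\tilde\omega=\nabla\varphi$ and the boundary alternative $(b+\nabla\varphi)\cdot n\geq 0$ versus \eqref{eq:BoundSub}) matches the paper's for part (i). The genuine gap is in part (ii): you replace the analysis of actual near-optimal trajectories by a pointwise optimization over an asserted set $\mathcal{K}(x_0)$ of ``backward-admissible velocity/control pairs'' split into an entering mode and a sliding mode. The inequality $\partial_t\varphi+\sup_{(v,\omega)\in\mathcal{K}(x_0)}[v\cdot\nabla\varphi-\tfrac12|\omega|^2]-\tfrac12|\dot y-h|^2\geq 0$ is exactly what needs to be \emph{proved}, and it does not follow from ``near-optimality in the DPP'': the minimising sequences in Lemma~\ref{lem:Bellman} are general reflected trajectories that may oscillate between the interior and $\partial\stateSpace$ on $[t-\tau_k,t]$ and have no well-defined instantaneous backward velocity at $x_0$. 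Making your Case~A/Case~B reduction rigorous is the technical heart of the paper's proof: one needs (a) a uniform-in-$k$ equicontinuity and $H^1$ bound on near-optimal trajectories (Corollary~\ref{cor:Hölder}), (b) a decomposition of $\dot x_{\omega_k}\cdot\nabla\varphi$ into normal and tangential components relative to a $C^2$ defining function $\gamma$, and (c) a separate treatment of the three sub-cases $\nabla\varphi\cdot n>0$, $=0$, $<0$, using respectively the sign of $\dot x_{\omega_k}\cdot\nabla\gamma$, a $C^2$ Taylor bound $|\nabla\varphi\cdot\nabla\gamma|^2\leq C\tau_k$, and the integral identity $\int_{t-\tau_k}^t\dot x_{\omega_k}\cdot\nabla\gamma\,\dd s=\gamma(x)-\gamma(x_{\omega_k}(t-\tau_k))\geq 0$ combined with Cauchy--Schwarz. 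Your formal computation of $M_A$ and $M_B$ does correctly identify the mechanism producing the $\tfrac12$ factor, but as written it is a heuristic for the answer, not a proof of the super-solution property.

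Two smaller issues. In part (iii) the relevant point is not $M_A\geq M_B$ (a statement about the super-solution Hamiltonian) but the transfer on the \emph{sub-solution} side: if $b\cdot n\leq 0$, then $b\cdot n+\partial_n\varphi\leq 0$ implies $b\cdot n+\tfrac12\partial_n\varphi\leq 0$, so a sub-solution for \eqref{eq:HJBBulk}--\eqref{eq:BoundSub} is automatically one for \eqref{eq:HJBBulk}--\eqref{eq:BoundSuper}, after which uniqueness follows from the known comparison principle for a \emph{single} oblique Neumann condition (the paper cites \cite[Theorem 3]{barles1991fully}, and Remark~\ref{rem:inward} reduces the problem to a state-constrained one covered by \cite[Theorem III.2]{capuzzo1990hamilton}); there is no need to re-prove a doubling-of-variables argument, and the reference to \cite{da2006uniqueness} for growth at infinity is irrelevant here since $\stateSpace$ is bounded. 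Finally, the continuity of $\costcome$ needed to run the DPP argument is indeed inherited from Theorem~\ref{thm:CVCostpen}, as you say, so that part is fine.
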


In the boundary conditions \eqref{eq:BoundSub}-\eqref{eq:BoundSuper}, $n(x)$ denotes the outward normal vector at the point $x \in \partial\stateSpace$.
Theorem \ref{thm:Visco} is proved in Section \ref{subsec:visco}.
The used notion of solution is the standard notion of viscosity solution with Neumann boundary condition \cite{lions1985neumann,barles1991fully}, in which the boundary condition is relaxed by allowing the equation to hold at the boundary. 
In general, the boundary conditions \eqref{eq:BoundSub} and \eqref{eq:BoundSuper} do not coincide.
Up to our knowledge, proving a comparison principle for this setting is an open question, which will be the subject of future works. 

However, if $b \cdot n \leq 0$ on $\partial \stateSpace$, the definition of sub-solution shows that any viscosity sub-solution of \eqref{eq:HJBBulk} with the boundary condition \eqref{eq:BoundSub} is also a viscosity sub-solution of \eqref{eq:HJBBulk} with the boundary condition \eqref{eq:BoundSuper}. 
The comparison principle is then proved in \cite[Theorem 3]{barles1991fully} and uniqueness holds for the viscosity solution of the HJB equation \eqref{eq:HJBBulk}-\eqref{eq:BoundSuper}.
Furthermore, Remark \ref{rem:allsuper} shows in this case that $\costcome$ is a viscosity super-solution of \eqref{eq:HJBBulk} on the \emph{closed} domain $[0,T] \times \overline{G}$, with no further boundary condition. 
This recovers the notions of \emph{constrained viscosity solution} introduced in \cite{soner1986optimal,ishii1996new}.
This was expected, since the assumption $b \cdot n \leq 0$ reduces the control problem \eqref{eq:costcome} to the standard control problem of \eqref{eq:eqNormal} under state constraints using Remark \ref{rem:inward}, and \eqref{eq:eqNormal} satisfies the \emph{inward pointing condition} from \cite{soner1986optimal}. 
The related comparison principle is proved in \cite[Theorem III.2]{capuzzo1990hamilton}.
\medskip

Note also that, in the case of optimal control for sweeping processes, a HJB equation was derived in \cite{Herm2024} characterizing a value function for forward-in-time optimal control problems. It is worth noting that this work considers a general process with an admissible set $\overline{G(t)}$ that evolves over time. The HJB equation is formulated with different boundary conditions for sub- and supersolutions, which account for the time-dependent evolution of the set $G$. 
In contrast, in our setting, the set $G$ is fixed, but the state process evolves backward in time.

\subsection{Links with stochastic filtering} \label{subsec:ResExistSto}

In general, viscosity solutions of HJB equations lack regularity; this may hinder an efficient computation of their minimisers. 
To circumvent this, a standard approach is to smooth them by adding a (small) laplacian term. 
Interestingly, \cite{HijabAoP,barasjames,fleming1997deterministic} proved that this smoothing procedure is connected to the renowned stochastic filtering problem
\cite{kallianpur1968estimation,kushner1967dynamical,jazwinski1970stochastic}...
In our constrained setting, this amounts to replacing the model dynamics \eqref{eq:dynsys} by the stochastic reflected dynamics
\begin{equation}
\label{eq:reflectedSDE}
\dd X^\varepsilon_t + \partial\model (X^\varepsilon_t) (\dd t) \ni b(t,X^\varepsilon_t) \dd t + \sqrt{\varepsilon} \sigma (t,X^\varepsilon_t) \dd B_t,
\end{equation}
on a filtered probability space $(\Omega,\mathcal{F},\mathbb{P},(\mathcal{F}_t)_{t \geq 0})$, the law of $X^\varepsilon_0$ being $q^\varepsilon_0 (x) \dd x$. 
The integral of the deterministic model noise $\modelNoise$ in \eqref{eq:dynsys} has been replaced by an adapted Brownian motion $(B_t)_{0\leq t \leq T}$ in $\RR^{\NmodelNoiseSpace}$.
Under our current Lipschitz assumptions, well-posedness for \eqref{eq:reflectedSDE} is proved in \cite[Section 4.2.2]{pardoux2014sdes}.
We complement \eqref{eq:reflectedSDE} with the observation process defined by
\begin{equation}
\label{eq:obSto}
\dd Y^\varepsilon_t = h ( t , X^\varepsilon_t) \dd t + \sqrt{\varepsilon} \dd B'_t, \quad Y^\varepsilon_0 =0,
\end{equation}
where $(B'_t)_{0\leq t \leq T}$ is an adapted Brownian motion in $\RR^{m}$ independent of $(B_t)_{0\leq t \leq T}$.
Equation \eqref{eq:obSto} is the stochastic analogous of \eqref{eq:obs}.
The purpose of stochastic filtering is to compute the law $\pi^\varepsilon_t$ of $X^\varepsilon_t$ knowing the observation up to time $t$, i.e. the law of $X^\varepsilon_t$ conditionally to the filtration $\sigma( Y^\varepsilon_s, 0 \leq s \leq t )$:
\[ \forall \varphi \in C_b(\overline{\stateSpace},\R), \quad \int_{\overline{\stateSpace}} \varphi \, \dd \pi^\varepsilon_t = \E \big [ \varphi ( X^\varepsilon_t ) \vert \sigma( Y^\varepsilon_s, 0 \leq s \leq t ) \big]. \] 
Hence, $\int_{\overline{\stateSpace}}\varphi \, \dd \pi^\varepsilon_t$ is the optimal estimator in the least-square sense of $\varphi( X^\varepsilon_t )$ given $(Y^\varepsilon_s )_{0 \leq s \leq t}$.
From \cite[Section 3]{pardoux1978filtrage}, $\pi^\varepsilon_t$ is expected to have a density w.r.t. the Lebesgue measure, and thus $\pi^\varepsilon_t$ does not charge the boundary $\partial G$. 
We are now interested in the asymptotic behavior of $\pi^\varepsilon_t$ when $\varepsilon$ goes to $0$ . 
In particular, we show that $\pi^\varepsilon_t$ concentrates on minimisers of $x \mapsto \costcome (t,x)$ as $\varepsilon \rightarrow 0$, 
corresponding to the Mortensen observer $\hat{x}(t)$ when it is uniquely defined. 
Note that for each $t \geq 0$, there exists at least one minimiser for the function  $x \mapsto \costcome (t,x)$ since it is continuous on the compact set $\overline{G}$.
We first review the existing results for non-constrained systems in Sections \ref{subsubsec:KalmanSto}-\ref{subsubsec:BarasJames}, before stating our new results for the non-smooth case in Section \ref{subsubsec:SmallReflected}.

\subsubsection{Kalman-Bucy filter} \label{subsubsec:KalmanSto}

We go back to the linear setting of Section \ref{subsec:IntroKalman},
\[ b(t,x) = A x, \qquad \sigma(t,x) = \Sigma, \qquad h(t,x) = H x, \]
and we initialise $X^\varepsilon_0$ from the Gaussian law $\mathcal{N}(\hat{x}_0,\varepsilon P_0)$. 
The processes defined by \eqref{eq:reflectedSDE} and \eqref{eq:obSto} are then Gaussian at each time, so that the same goes for the conditioned process: it is thus sufficient to compute the conditional mean and covariance matrix $(\hat{X}^\varepsilon_t,P^\varepsilon_t)$. 
Classically \cite{kalmanbucy1961,jazwinski1970stochastic,davis1977linear}, the covariance $P^\varepsilon_t = \varepsilon P(t)$ is deterministic, and 
\begin{equation} \label{eq:kalmanSto}
\dd \hat{X}^\varepsilon_t = A \hat{X}^\varepsilon_t \dd t + P(t)H^\top [\dd Y^\varepsilon_t - H \hat{X}^\varepsilon_t \dd t], \quad \hat{X}^\varepsilon_0 = \hat{x}_0,  
\end{equation}
where $P(t)$ is the solution to the Riccati equation \eqref{eq:Riccati}. 
As a consequence, 
\[ \pi^\varepsilon_t ( \dd x ) = Z_\varepsilon^{-1} \exp \bigg[ - \frac{[x - \hat{X}^\varepsilon_t] \cdot P^{-1}(t)[ x - \hat{X}^\varepsilon_t ]}{2 \varepsilon} \bigg] \dd x, \]
where $Z_\varepsilon$ is a normalisation constant. 
As $\varepsilon \rightarrow 0$, this density concentrates on the solution $\hat{x}(t)$ of the deterministic equation \eqref{eq:kalman}.
If we freeze a $C^1$ realisation $(y(t))_{t \geq 0}$ of the process $(Y^\varepsilon_t )_{t \geq 0}$, Equation \eqref{eq:kalmanSto} exactly correspond to \eqref{eq:kalman}, and the related quadratic cost-to-come corresponds to the logarithm of the density of $\pi^\varepsilon_t$. 
The Mortensen observer in this linear setting thus appears as a maximum-likelihood estimator.

\subsubsection{Small noise filtering for the penalised dynamics} \label{subsubsec:BarasJames}
Let us apply the results of \cite{barasjames} to (a variation of) the penalised dynamics \eqref{eq:dynsyspen} from Section \ref{subsec:ResPen}, in the case $\NstateSpace = \NmodelNoiseSpace$ and $\sigma \equiv \mathrm{Id}$.
In this setting, the analogous of \eqref{eq:reflectedSDE}-\eqref{eq:obSto} reads
\[ 
\begin{cases}
\dd X^{\kappa,\varepsilon}_t = b(t,X^{\kappa,\varepsilon}_t) \dd t - f_\kappa (X^{\kappa,\varepsilon}_t) \dd t + \sqrt{\varepsilon} \dd B_t, \\
\dd Y^{\kappa,\varepsilon}_t = h (t,X^{\kappa,\varepsilon}_t) \dd t + \sqrt{\varepsilon} \dd B'_t,
\end{cases} 
\]
where $( X^{\kappa,\varepsilon}_t )_{t \geq 0}$ is a diffusion process in $\R^\NstateSpace$ with initial law $q_0^{\kappa,\varepsilon}$.
The results of \cite{barasjames} assume that the coefficients are smooth bounded functions with bounded derivatives, and we assume the same for $b$, $f_\kappa$ and $h$ in this sub-section, up to replacing the penalisation $f_\kappa$ from Section \ref{subsec:ResPen} by  a suitable regularisation. 
Since the present sub-section is mainly here for illustration and comparison purposes, we did not try to extend these assumptions.

The filtering density $\pi^{\kappa,\varepsilon}_t$ can be computed as the solution of a non-linear stochastic PDE known as the Kushner-Stratonovich equation \cite{kushner1967dynamical}.
Alternatively, $\pi^{\kappa,\varepsilon}_t ( \dd x)$ can be computed by normalising the positive (random) measure $q^{\kappa,\varepsilon}_t (x) \dd x$ that solves the Zakai equation \cite{zakai1969optimal,pardouxt1980stochastic}:
\begin{equation} \label{eq:zakai}
\dd q^{\kappa,\varepsilon}_t (x) = (L^{\kappa,\varepsilon}_t)^\star q^{\kappa,\varepsilon}_t (x) \dd t + \frac{1}{\varepsilon} q^{\kappa,\varepsilon}_t  h(t,x) \cdot \dd Y^{\kappa,\varepsilon}_t,
\end{equation}
where $(L^{\kappa,\varepsilon}_t)^\star$ is the formal $L^2$-adjoint of the infinitesimal generator $L^{\kappa,\varepsilon}_t$ of the Markov process $(X^{\kappa,\varepsilon}_t)_{t \geq 0}$, which is given on $C^2$ test functions $\varphi : \RR^n \rightarrow \RR$ by
\[ L^{\kappa,\varepsilon}_t \varphi (x) = [ b(t,x) - f_\kappa (x) ] \cdot \nabla \varphi(x) + \frac{\varepsilon}{2} \Delta \varphi (x).  \]
Following the theory of pathwise filtering \cite{davis1981pathwise}, we introduce the random function $p^{\kappa,\varepsilon}_t$ defined by
\begin{equation} \label{eq:dossSuss}
p^{\kappa,\varepsilon}_t(x) \triangleq \bigg[ - \frac{1}{\varepsilon} Y^{\kappa,\varepsilon}_t h(t,x) \bigg] q^{\kappa,\varepsilon}_t(x),
\end{equation}
which solves the robust Zakai equation 
\begin{multline}\label{eq:zakaiRobust}
\partial_t p^{\kappa,\varepsilon}_t (x) = - [ b(t,x) - \regpen(x) - \nabla h(t,x)Y^{\kappa,\varepsilon}_t] \cdot \nabla p^{\kappa,\varepsilon}_t + \frac{\varepsilon}{2} \Delta p^{\kappa,\varepsilon}_t - \frac{1}{\varepsilon} \bigg\{ \frac{1}{2} \vert h(t,x) \vert^2 \\ 
+ Y^{\kappa,\varepsilon}_t \cdot [ \partial_t + L^{\kappa,\varepsilon}_t] h(t,x) - \frac{1}{2} \vert \nabla h(t,x) Y^\varepsilon_t \vert^2 + \varepsilon \nabla \cdot [ b(t,x) - \regpen(x) - \nabla h(t,x) Y^\varepsilon_t ] \bigg\}.
\end{multline} 
For each continuous realisation $y = ( y(t) )_{t \geq 0}$ of the process $(Y^{\kappa,\varepsilon}_t )_{t \geq 0}$, equation \eqref{eq:zakaiRobust} has a unique (deterministic) solution $ p^{\kappa,\varepsilon}$ which continuously depends on $y$ \cite[Lemma 3.2]{fleming1982optimal}.
This allows us to recover the random function $p^{\kappa,\varepsilon}_t$ by solving \eqref{eq:zakaiRobust} for each continuous path $y$.
This result motivates the approach of \cite{barasjames}: they freeze a $C^1$ realisation $(y(t))_{t \geq 0}$ of the process $(Y^{\kappa,\varepsilon}_t )_{t \geq 0}$, before to study the related solution $p^{\kappa,\varepsilon}$ of \eqref{eq:zakaiRobust} to make a connection with $\costcomepen$ as $\varepsilon \rightarrow 0$.  
Alternatively, Equation \eqref{eq:dossSuss} gives a meaning to $q^{\kappa,\varepsilon}$ from $p^{\kappa,\varepsilon}$  for each continuous realisation $y = ( y(t) )_{t \geq 0}$ of $(Y^{\kappa,\varepsilon}_t )_{t \geq 0}$: $q^{\kappa,\varepsilon}$ is now a continuous function of $y$.
\cite[Section 5]{fleming1997deterministic} then freezes a $C^1$ realisation $y$ to obtain that
\begin{equation} \label{eq:zakaiStrato}
\partial_t q^{\kappa,\varepsilon} (t,x) = (L^{\kappa,\varepsilon}_t)^\star q^{\kappa,\varepsilon} (t,x) \dd t - \frac{1}{\varepsilon} \bigg[ \frac{1}{2} \vert h(t,x) \vert^2 - \dot{y}(t) \cdot h (t,x) \bigg] q^{\kappa,\varepsilon} (t,x),
\end{equation}
which corresponds to the Stratonovich form of \eqref{eq:zakai} where $(Y^{\kappa,\varepsilon}_t )_{t \geq 0}$ was replaced by $(y(t) )_{t \geq 0}$. Now, let us introduce
\[ \tilde{q}^{\kappa,\varepsilon} (t,x) \triangleq \exp \bigg[ -\frac{1}{2 \varepsilon} \int_0^t \lvert \dot{y}(s) \rvert^2 \dd s \bigg] {q}^{\kappa,\varepsilon} (t,x),\quad \costcome^{\kappa,\varepsilon}(t,x) \triangleq - \varepsilon \log \tilde{q}^{\kappa,\varepsilon}(t,x). \]
The exponential factor is a normalisation term that does not affect the minimisation of $x \mapsto \costcome^{\kappa,\varepsilon}(t,x)$.
The following result \cite[Lemma 5.1]{fleming1997deterministic} makes the connection with the penalised cost-to-come $\costcomepen$ defined in \eqref{eq:costcomeIntro}.

\begin{theorem}[Small noise limit]\label{thm:flemSmallNoise}
For every compact set $K \subset \RR^\NstateSpace$, if
\begin{equation} \label{eq:iniCVK}
\sup_{x \in K} \lvert -\varepsilon \log q^{\kappa,\varepsilon}_0 (x) - \psi(x) \rvert \leq C_K \varepsilon^{1/2}, 
\end{equation} 
for some $C_K >0$ independent of $\varepsilon$, then for every $t > 0$,
\[ \sup_{(s,x) \in [0,t] \times K} \lvert \costcome^{\kappa,\varepsilon} (s,x) - \costcomepen (s,x) \rvert \leq C'_K \varepsilon^{1/2}, \]
for some constant $C'_K > 0$ that only depends on $(t,\kappa,K)$. 
\end{theorem}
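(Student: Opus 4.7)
The strategy is a vanishing--viscosity argument built around the Hopf--Cole substitution hidden in the very definition $\tilde q^{\kappa,\varepsilon} = \exp(-\costcome^{\kappa,\varepsilon}/\varepsilon)$.

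First, I would derive the PDE satisfied by $\costcome^{\kappa,\varepsilon}$. Inserting this exponential change of variables into \eqref{eq:zakaiStrato}, the factor $\exp(-\tfrac{1}{2\varepsilon}\int_0^t|\dot y|^2\,\diff s)$ present in $\tilde q^{\kappa,\varepsilon}$ is precisely designed to cancel the $\dot y\!\cdot\! h$ cross term, and a routine computation yields
\begin{equation*}
\partial_t \costcome^{\kappa,\varepsilon} + (b - \regpen)\cdot \nabla \costcome^{\kappa,\varepsilon} + \tfrac{1}{2}|\nabla \costcome^{\kappa,\varepsilon}|^2 - \tfrac{1}{2}|\dot y - h|^2 = \tfrac{\varepsilon}{2}\Delta \costcome^{\kappa,\varepsilon} + \varepsilon \nabla\!\cdot\! (b - \regpen),
\end{equation*}
with initial datum $\costcome^{\kappa,\varepsilon}(0,\cdot) = -\varepsilon \log q^{\kappa,\varepsilon}_0$. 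Assumption \eqref{eq:iniCVK} makes this initial datum $\psi + O(\sqrt\varepsilon)$ on $K$, and the right-hand side is an $O(\varepsilon)$ perturbation of the HJB equation satisfied by $\costcomepen$ (posed on all of $\RR^\NstateSpace$ in this penalised setting, so no boundary terms come into play).

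Next, I would exploit the stochastic--control representation of the viscous equation. Since $\tfrac12|p|^2$ is the Legendre transform of $\tfrac12|u|^2$, the standard verification theorem yields
\begin{equation*}
\costcome^{\kappa,\varepsilon}(t,x) = \inf_u \mathbb{E}\Big[-\varepsilon \log q^{\kappa,\varepsilon}_0 (X^{u,\varepsilon}_0) + \int_0^t \timeError(s, X^{u,\varepsilon}_s, u_s)\, \diff s\Big] + O(\varepsilon),
\end{equation*}
where $X^{u,\varepsilon}$ is a suitable diffusion with drift $b - \regpen + u$ and diffusion coefficient $\sqrt{\varepsilon}\,\mathrm{Id}$ conditioned so that $X_t=x$, and the $O(\varepsilon)$ term absorbs the corrector $\varepsilon\nabla\!\cdot\!(b-\regpen)$. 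For the upper bound, plugging a near-minimising deterministic control $\bar\omega$ of $\costcomepen(t,x)$ into this stochastic problem, Gronwall's lemma together with the Burkholder--Davis--Gundy inequality yields $\mathbb{E} \sup_{s\le t} |X^{\bar\omega,\varepsilon}_s - x_{\bar\omega}(s)|^2 \le C\varepsilon t$ (using the bounded Lipschitz regularity of $b$ and $\regpen$ assumed in this sub-section); combined with the Lipschitz continuity of $\psi$ and $h$ and with \eqref{eq:iniCVK}, this produces $\costcome^{\kappa,\varepsilon}(t,x)\le \costcomepen(t,x)+C_K\sqrt\varepsilon$.

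The matching lower bound is the more delicate step, since one cannot feed a non-anticipative stochastic control directly into the deterministic minimisation. Following the Fleming--Mitter approach, I would mollify $\costcomepen$ (continuous by the standard control-theoretic arguments quoted after \eqref{eq:costcomepen}) at scale $\delta$ into a smooth function $\costcomepen^\delta$, verify that $\costcomepen^\delta - C_K(\delta + \varepsilon/\delta)\,t$ is a strict sub-solution of the viscous HJB equation---the $O(\delta)$ error arising from mollifying the quadratic $|\nabla \costcomepen|^2$, the $O(\varepsilon/\delta)$ one from the $\tfrac{\varepsilon}{2}\Delta \costcomepen^\delta$ term---and apply the comparison principle for the viscous equation to conclude $\costcome^{\kappa,\varepsilon} \ge \costcomepen - C_K(\delta + \varepsilon/\delta)$; optimising $\delta \sim \sqrt{\varepsilon}$ delivers the desired rate. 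The main obstacle is precisely this bookkeeping: keeping every source of error uniformly below the $\sqrt{\varepsilon}$ threshold despite the quadratic Hamiltonian. This rests on the uniform Lipschitz continuity of $\costcomepen$ in $x$ (a classical output of Mortensen theory in the smooth bounded-coefficient setting of this sub-section) and on the $L^2$-boundedness of $\varepsilon$-optimal controls, which follows from the quadratic coercivity of $\timeError$ in $u$ together with the boundedness of $\psi$ on $K$.
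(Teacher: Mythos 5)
First, a point of reference: the paper does not prove Theorem~\ref{thm:flemSmallNoise} itself --- it is quoted from \cite[Lemma 5.1]{fleming1997deterministic} --- but it does prove the reflected analogue, Lemma~\ref{lem:RefQuantCV}, and that proof is the right benchmark. There, \emph{both} bounds come from the stochastic control representation \eqref{eq:backwardVerif}: the upper bound by inserting a near-optimal deterministic control into the stochastic problem, and the lower bound by taking a near-optimal adapted control $\alpha$, observing that for each sample point the path $r\mapsto\alpha_r(\omega)$ is an admissible deterministic $L^2$ control whose deterministic cost is at least $V^0$, and then using the moment estimate $\E\sup_r|X^{\varepsilon,\alpha}_r-X^{0,\alpha}_r|^2\leq C\varepsilon$ together with Cauchy--Schwarz. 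Your upper bound follows exactly this scheme and is fine (modulo one imprecision: the representation should not be phrased as a diffusion ``conditioned so that $X_t=x$''; one time-reverses the penalised dynamics --- legitimate here precisely because \eqref{eq:dynsyspen} is an unconstrained Lipschitz ODE/SDE --- and runs the controlled diffusion from $x$ at time $t$ back to time $0$, picking up the initial cost $-\varepsilon\log q^{\kappa,\varepsilon}_0$ there).

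Your lower bound, however, rests on the claim that ``one cannot feed a non-anticipative stochastic control directly into the deterministic minimisation.'' That claim is false, and it is exactly what the reference argument does pathwise, as sketched above; this gives the two-sided $\varepsilon^{1/2}$ bound in one stroke. Your substitute route (mollify $\costcomepen$ at scale $\delta$, show it is an approximate sub-solution of the viscous equation up to $O(\delta+\varepsilon/\delta)$, invoke comparison, optimise $\delta\sim\sqrt\varepsilon$) is a legitimate vanishing-viscosity strategy and the error bookkeeping you describe is plausible given the uniform Lipschitz bound on $\costcomepen$ and the convexity of the Hamiltonian (Jensen gives the sub-solution inequality in the right direction). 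But as stated it has a genuine gap: the hypothesis \eqref{eq:iniCVK} controls the initial datum only on the compact set $K$, whereas a comparison principle for the viscous equation must be run globally on $\RR^\NstateSpace$ (and for a quadratic Hamiltonian with a solution that is unbounded, since $\psi\to+\infty$ at infinity). You would need to localise via the finite domain of dependence of the control problem --- trajectories reaching $K$ at time $t$ with $L^2$-bounded controls and bounded drift stay in a fixed compact set --- and justify the comparison principle in that class. None of this is needed if you use the control representation for the lower bound as well; I would recommend replacing that step accordingly.
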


As a consequence (see e.g. \cite[Lemma 6.1]{barasjames}), we get the following large deviation result.

\begin{corollary}[Laplace principle] \label{cor:LaplaceJB}
If \eqref{eq:iniCVK} holds for every compact set, then for every bounded continuous $\Phi :\RR^\NstateSpace \rightarrow \RR$, 
\[ \forall t \geq 0, \quad -\varepsilon \log \int_{\RR^\NstateSpace} e^{-\Phi(x)/\varepsilon} \tilde{q}^{\kappa,\varepsilon}(t,x) \dd x \xrightarrow[\varepsilon \rightarrow 0]{} \inf_{x \in \RR^\NstateSpace} \Phi(x) + \costcomepen (t,x). \]
\end{corollary}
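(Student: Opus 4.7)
The plan is to exploit the identity $\tilde q^{\kappa,\varepsilon}(t,x) = \exp[-\costcome^{\kappa,\varepsilon}(t,x)/\varepsilon]$ coming from the definition of $\costcome^{\kappa,\varepsilon}$ in Section \ref{subsubsec:BarasJames}, which rewrites the integral of interest as
\[ I_\varepsilon := \int_{\RR^n} \exp\Bigl( -\bigl[\Phi(x) + \costcome^{\kappa,\varepsilon}(t,x)\bigr]/\varepsilon \Bigr)\, dx. \]
Setting $m_\kappa := \inf_{x \in \RR^n} \bigl[\Phi(x) + \costcomepen(t,x)\bigr]$, I would prove $-\varepsilon \log I_\varepsilon \to m_\kappa$ by the two-sided Laplace argument, with Theorem \ref{thm:flemSmallNoise} as the single analytic input.

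For the upper bound $\limsup_{\varepsilon\to 0}(-\varepsilon \log I_\varepsilon) \leq m_\kappa$, I would pick for each $\eta > 0$ a near-minimiser $x_0$ of $\Phi + \costcomepen(t,\cdot)$, then use continuity to select a closed ball $\overline B$ around $x_0$ on which $\Phi + \costcomepen \leq \Phi(x_0) + \costcomepen(t,x_0) + \eta$. Since $\overline B$ is compact, Theorem \ref{thm:flemSmallNoise} gives $\costcome^{\kappa,\varepsilon} \leq \costcomepen + C_{\overline B}\, \varepsilon^{1/2}$ uniformly on $\overline B$ for small $\varepsilon$. Restricting $I_\varepsilon$ to $\overline B$ bounds it from below by $|\overline B|\, e^{-[m_\kappa + 2\eta]/\varepsilon}$ for small $\varepsilon$, and sending $\varepsilon \to 0$ then $\eta \to 0$ yields the claim.

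For the lower bound, the first step is to establish coercivity of $\costcomepen(t,\cdot)$ on $\RR^n$. Any admissible pair $(x_\omega(0),\omega)$ reaching $x$ at time $t$ under the penalised dynamics \eqref{eq:dynsyspen} satisfies, by a Gronwall-type estimate exploiting the linear growth of $b$ and $f_\kappa$, a bound of the form $|x|^2 \leq C_t(|x_\omega(0)|^2 + \|\omega\|_{L^2}^2 + 1)$. Combined with the coercivity of $\psi$, this forces $\costcomepen(t,x) \to +\infty$ as $|x| \to +\infty$; the same energy estimate, applied directly to the Feynman--Kac (Kallianpur--Striebel) representation of $\tilde q^{\kappa,\varepsilon}$, or alternatively through a Gaussian upper bound for the robust Zakai equation \eqref{eq:zakaiStrato}, yields a uniform-in-$\varepsilon$ tail bound $\costcome^{\kappa,\varepsilon}(t,x) \geq W(x) - C$ for some coercive $W$. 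With this in hand, I split $I_\varepsilon$ over $\overline B_R$ and its complement for $R$ large. On $\overline B_R$, Theorem \ref{thm:flemSmallNoise} gives $I_\varepsilon^{\leq R} \leq |\overline B_R|\, e^{-(m_\kappa - C_R \varepsilon^{1/2})/\varepsilon}$; on the complement, choosing $R$ so that $W(x) \geq m_\kappa + \|\Phi\|_\infty + 1$ for $|x| > R$ makes the tail decay strictly faster than $e^{-m_\kappa/\varepsilon}$. Summing and taking $-\varepsilon \log$ gives the desired lower bound.

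The main obstacle is the tail step: Theorem \ref{thm:flemSmallNoise} only controls $\costcome^{\kappa,\varepsilon}$ on compact sets, with constants that may blow up as the compact grows, so one must produce the uniform-in-$\varepsilon$ coercive lower bound on $\costcome^{\kappa,\varepsilon}$ by a separate argument. This is where the quadratic growth of the running cost $\tfrac12|\omega|^2$ together with the linear growth of $b$, $f_\kappa$ and $h$ come into play, and is directly analogous to the reasoning used in \cite[Lemma 6.1]{barasjames}.
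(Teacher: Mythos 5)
Your argument is correct and is essentially the paper's: the paper proves this corollary only by invoking \cite[Lemma 6.1]{barasjames}, whose content is precisely the two-sided Laplace argument you reconstruct — compact localisation via the uniform estimate of Theorem \ref{thm:flemSmallNoise} for both bounds, plus a uniform-in-$\varepsilon$ coercive lower bound on $\costcome^{\kappa,\varepsilon}$ (exponential tightness) to control the tail, which you correctly identify as the only non-trivial extra input and for which your sketch (coercivity of $\psi$, the quadratic running cost, and the boundedness of the regularised coefficients assumed in that subsection) is the standard and adequate route.
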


This statement is equivalent to a large deviation principle for $\tilde{q}^{\kappa,\varepsilon} (t,x) \dd x$, see e.g. \cite{dembo2009large,feng2006large}.
As $\varepsilon \rightarrow 0$, this tells that the non-normalised density $q^{\varepsilon,\kappa}_t$ concentrates on the minimisers of $x \mapsto \costcomepen(t,x)$.
When uniqueness holds for this minimiser, this precisely corresponds to the Mortensen observer \eqref{eq:mortensen} as defined in Section \ref{subsec:IntroMortensen}.
This extends the observation that we made in the Gaussian setting of Section \ref{subsubsec:KalmanSto}.
A stronger large deviation result for the conditional density $q^{\kappa,\varepsilon}_t$ given $y$ is proved in \cite{HijabAoP}.

\subsubsection{Small noise filtering for reflected dynamics} \label{subsubsec:SmallReflected}
Still in the setting $\NstateSpace = \NmodelNoiseSpace$ and $\sigma \equiv \mathrm{Id}$, we now extend the previous results to the reflected dynamics:
\[ 
\begin{cases}
\dd X^{\varepsilon}_t + \partial\model (X^{\varepsilon}_t) ( \dd t) \ni b (t,X^{\varepsilon}_t) \dd t + \sqrt{\varepsilon} \dd B_t, \\
\dd Y^{\varepsilon}_t = h (t,X^{\kappa,\varepsilon}_t) \dd t + \sqrt{\varepsilon} \dd B'_t,
\end{cases} 
\]
the initial law of $x$ being $q^\varepsilon_0 (x) \dd x$.
As previously, the filtering density $\pi^\varepsilon_t(\dd x)$ can be computed by normalising the solution $q^{\varepsilon}_t (x) \dd x$ of the Zakai equation. 
In this setting, the Zakai equation was studied in \cite{pardoux1978filtrage,pardoux1978stochastic,hucke1990nonlinear}.
Equation \eqref{eq:zakai} is now completed with a ``no-flux'' boundary condition:
\[
\begin{cases}
\dd q^\varepsilon_t(x) = \nabla \cdot [ - q^\varepsilon_t(x) b(t,x) + \frac{\varepsilon}{2} \nabla q^\varepsilon_t(x) ] + \frac{1}{\varepsilon} q^\varepsilon_t(x) h(x) \cdot \dd Y^\varepsilon_t, \quad x \in \stateSpace, \\
- q^\varepsilon_t (x) b(t,x) \cdot n(x)  + \frac{\varepsilon}{2} \frac{\partial q^\varepsilon_t}{\partial n}(x) = 0, \quad x \in \partial \stateSpace,
\end{cases}
\]
As previously, we freeze a $C^1$ realisation $y = (y(t))_{t \geq 0}$ to bring the study back to a deterministic function $q^\varepsilon$ that continuously depends on $y$. 
After normalising 
\[ \tilde{q}^{\varepsilon} (t,x) \triangleq \exp \bigg[ -\frac{1}{2 \varepsilon} \int_0^t \lvert \dot{y}(s) \rvert^2 \dd s \bigg] {q}^{\varepsilon} (t,x), \]
we end up with the following equation:
\begin{equation} \label{eq:RefZakaiStrato}
\begin{cases}
\partial_t \tilde{q}^{\varepsilon} (t,x) = \nabla \cdot [ - \tilde{q}^\varepsilon_t(x) b(t,x) + \frac{\varepsilon}{2} \nabla \tilde{q}^\varepsilon_t(x) ] - \frac{1}{2\varepsilon}  \vert \dot{y}(t) - h(t,x) \vert^2 \tilde{q}^{\varepsilon} (t,x), \\
- \tilde{q}^\varepsilon (t,x) b(t,x) \cdot n(x)  + \frac{\varepsilon}{2} \frac{\partial \tilde{q}}{\partial n}(t,x) = 0, \quad x \in \partial \stateSpace.
\end{cases}
\end{equation}

We assume that $q^\varepsilon_0$ is defined and $C^1$ in a neighbourhood of $\partial \stateSpace$, so that existence and uniqueness for a solution of \eqref{eq:RefZakaiStrato} in $C([0,T] \times \overline\stateSpace) \cap C^{1,2}((0,T] \times \stateSpace)$ stems from \cite[Chapter 5,Corollary 2]{friedman2008partial}.
If we define $\costcome^\varepsilon \triangleq -\varepsilon \log \tilde{q}^\varepsilon$ as previously, we obtain the boundary condition 
\[ b(t,x) \cdot n(x) + \frac{1}{2} \frac{\partial \costcome^\varepsilon}{\partial n} (t,x) = 0, \quad x \in \partial \stateSpace, \]
which differs from the one of the sub-solution in Theorem \ref{thm:Visco}.
This suggests that the analogous of Theorem \ref{thm:flemSmallNoise} is no more true in the present setting (except if $b \cdot n \leq 0$ on $\partial G$).
However, we still managed to prove the large deviation result corresponding to Corollary \ref{cor:LaplaceJB}. 

\begin{theorem}[Laplace principle] \label{thm:refLaplace}
If
\begin{equation*} \label{eq:iniCVG}
\sup_{x \in \overline\stateSpace} \lvert -\varepsilon \log q^{\varepsilon}_0 (x) - \psi(x) \rvert \xrightarrow[\varepsilon \rightarrow 0]{} 0, 
\end{equation*} 
then for every continuous $\Phi :\overline\stateSpace \rightarrow \RR$, 
\[ \forall t \geq 0, \quad -\varepsilon \log \int_{\stateSpace} e^{-\Phi(x)/\varepsilon} \tilde{q}^{\varepsilon} (t,x) \dd x \xrightarrow[\varepsilon \rightarrow 0]{} \inf_{x \in \overline\stateSpace} \Phi(x) + \costcome (t,x). \]
\end{theorem}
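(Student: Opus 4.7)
The plan is to combine a Feynman–Kac representation of the normalised Zakai density with a Freidlin–Wentzell large deviation principle for the reflected diffusion $X^\varepsilon$ and then apply Varadhan's lemma. Observe first that \eqref{eq:RefZakaiStrato} is the Kolmogorov forward equation, with its natural no-flux boundary condition, for the reflected process $\dd X^\varepsilon_s + \partial\model(X^\varepsilon_s)(\dd s) \ni b(s,X^\varepsilon_s)\dd s + \sqrt{\varepsilon}\,\dd B_s$ killed at rate $\tfrac{1}{2\varepsilon}|\dot{y}(s) - h(s,X^\varepsilon_s)|^2$. Combining the $C^{1,2}$ regularity of $\tilde q^\varepsilon$ given by \cite{friedman2008partial} with Itô's formula on the reflected dynamics yields the Feynman–Kac identity
\[
\int_G \varphi(x)\,\tilde{q}^\varepsilon(t,x)\,\diff x = \E_{q_0^\varepsilon}\!\bigg[\varphi(X_t^\varepsilon)\exp\!\bigg(-\frac{1}{2\varepsilon}\int_0^t |\dot{y}(s) - h(s,X_s^\varepsilon)|^2\,\diff s\bigg)\bigg]
\]
for every continuous $\varphi$ on $\overline{G}$, the subscript $q_0^\varepsilon$ meaning that $X_0^\varepsilon$ has initial density $q_0^\varepsilon$. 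Applying this with $\varphi = e^{-\Phi/\varepsilon}$ recasts the quantity of interest as $-\varepsilon\log \E_{q_0^\varepsilon}[\exp(-F(X^\varepsilon)/\varepsilon)]$, with
\[
F(\phi) \triangleq \Phi(\phi(t)) + \tfrac{1}{2}\int_0^t |\dot{y}(s)-h(s,\phi(s))|^2\,\diff s,
\]
a bounded continuous functional of $\phi \in C([0,t];\overline G)$ since $\dot{y}$ is bounded on $[0,t]$ and $h,\Phi$ are continuous on the compact set $\overline G$.

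The second step is a joint LDP for $(X_0^\varepsilon, X^\varepsilon_\cdot)$ under $\mathbb{P}_{q_0^\varepsilon}$. Standard Freidlin–Wentzell theory, extended to reflected diffusions in bounded convex domains, provides, uniformly in the starting point $x_0 \in \overline G$, a good rate function
\[
J_{x_0}(\phi) = \inf\!\bigg\{\tfrac{1}{2}\int_0^t |\modelNoise(s)|^2\,\diff s \;:\; \dot\phi + \partial\model(\phi) \ni b(\cdot,\phi) + \modelNoise,\; \phi(0)=x_0\bigg\}
\]
for the law of the reflected paths started at $x_0$ (with $J_{x_0}(\phi)=+\infty$ unless $\phi$ is absolutely continuous, stays in $\overline G$ and starts at $x_0$). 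Using the uniform convergence $-\varepsilon\log q_0^\varepsilon \to \psi$ on $\overline G$ and the splitting $\E_{q_0^\varepsilon}[\cdot] = \int q_0^\varepsilon(x_0)\E_{x_0}[\cdot]\,\diff x_0$, a routine Laplace argument on the initial integration upgrades the latter LDP to a joint LDP for $(X_0^\varepsilon,X^\varepsilon_\cdot)$ with good rate $\psi(x_0) + J_{x_0}(\phi)$.

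Varadhan's lemma applied to $F$ and this joint LDP then gives
\[
-\varepsilon\log \!\int_G e^{-\Phi(x)/\varepsilon} \tilde{q}^\varepsilon(t,x)\,\diff x \xrightarrow[\varepsilon\to 0]{} \inf_{x_0,\phi}\!\Big\{\psi(x_0) + J_{x_0}(\phi) + \Phi(\phi(t)) + \tfrac{1}{2}\!\int_0^t |\dot{y}-h(s,\phi)|^2\,\diff s\Big\}.
\]
Reorganising the infimum by fixing the endpoint $z=\phi(t)\in\overline G$ first (and noting that any admissible pair $(x_0,\modelNoise)$ for $J_{x_0}(\phi)$ yields $(x_\modelNoise(0),\modelNoise)\in\mathcal{A}^\stateSpace_{t,z}$ in \eqref{eq:costcome}, and conversely) identifies this limit with $\inf_{z \in \overline G}\{\Phi(z) + \costcome(t,z)\}$, as desired.

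The main obstacle is the uniform-in-initial-condition LDP for reflected diffusions in $\overline G$: although the pointwise LDP follows from known theory, one must verify that the upper and lower estimates and exponential tightness are uniform in $x_0 \in \overline G$ in order to combine them cleanly with the initial density factor $q_0^\varepsilon$. A secondary but essentially routine difficulty is rigorously justifying the Feynman–Kac identity on $\overline G$, which proceeds via Itô–Tanaka applied to the reflected process together with the fact that the Skorokhod reflecting measure is supported on $\{X_s^\varepsilon\in\partial G\}$, where the Neumann condition on $\tilde q^\varepsilon$ precisely cancels the local-time boundary contributions. Once these two ingredients are in place, the identification of the limit and the application of Varadhan's lemma are standard.
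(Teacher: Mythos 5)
Your strategy is correct in outline but genuinely different from the paper's. The paper never invokes a path-space large deviation principle: it regularises $\Phi$, solves the \emph{backward} Neumann problem with terminal datum $e^{-\Phi/\varepsilon}$, uses the PDE duality \eqref{eq:duality} of \cite{hucke1990nonlinear} to transfer the test function onto the initial density, and then reduces everything to the uniform convergence $V^\varepsilon_\Phi \to V^0_\Phi$ of the value functions of a family of stochastic control problems for \emph{controlled reflected} SDEs (Lemma \ref{lem:RefQuantCV}), which follows quantitatively (rate $\varepsilon^{1/4}$) from the stability estimates of \cite[Proposition 4.16]{pardoux2014sdes}; the limit is then identified by a finite-dimensional Laplace principle on $\overline{G}$. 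You instead go the primal route: Feynman--Kac representation of $\tilde q^\varepsilon$, Freidlin--Wentzell LDP for the reflected diffusion uniform in the starting point, and Varadhan's lemma; the final identification of the variational problem with $\inf_z \Phi(z)+\costcome(t,z)$ is the same in both proofs. Your route is viable --- the LDP for Skorokhod-reflected diffusions in bounded convex domains with the rate function you state is available in the literature, and $F$ is indeed bounded continuous on $C([0,t];\overline G)$ --- but it imports strictly heavier machinery, and the uniform-in-$x_0$ LDP you flag as the ``main obstacle'' is precisely what the paper's dual formulation is designed to avoid; the dual route also yields a quantitative intermediate estimate that a Varadhan argument does not. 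Two small imprecisions in your write-up: the Feynman--Kac identity is most cleanly obtained by applying It\^o to the \emph{backward} solution $u^\varepsilon(s,X^\varepsilon_s)\exp(-\tfrac{1}{2\varepsilon}\int_0^s|\dot y - h|^2)$ (it is the Neumann condition on that dual solution, not on $\tilde q^\varepsilon$, that annihilates the local-time term; the no-flux condition on $\tilde q^\varepsilon$ enters only through the integration by parts in the duality), and your joint-LDP step for $(X_0^\varepsilon, X^\varepsilon_\cdot)$ deserves an explicit argument (e.g.\ Chaganty-type mixing of the initial Laplace asymptotics with the uniform path LDP) rather than being called routine. Neither point is fatal.
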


As $\varepsilon \rightarrow 0$, this tells that the non-normalised density $q^{\varepsilon}_t$ concentrates on the minimisers of $x \mapsto \costcome (t,x)$, recovering the desired Mortensen observer.
Theorem \ref{thm:refLaplace} is proved in Section \ref{sec:Limitfiltering}.

\begin{remark}[Loss of the boundary condition]
Interestingly, the boundary condition for $\costcome^\varepsilon$ differs from the one of the sub-solution in Theorem \ref{thm:Visco}.
This reminds us of a fundamental difference between $X^\varepsilon$ and $x_\modelNoise$ around the boundary $\partial \stateSpace$.
Indeed, the time spent by $X^\varepsilon$ on $\partial \stateSpace$ has $0$ Lebesgue-measure, whereas $x_\modelNoise$ can have a continuous dynamics on $\partial \stateSpace$.
This is also related to the non-reversibility of non-smooth dynamics.
\end{remark}

\section{Study of the value function} \label{sec:HJB}

This section is devoted to the proofs of Theorems \ref{thm:CVCostpen} and \ref{thm:Visco}, i.e. the uniform convergence of $\costcomepen$ and the viscosity solution description of $\costcome$.

\subsection{Convergence of the penalised problem} \label{subsec:CVpen}

\begin{lemma}[Control of the penalisation] \label{lem:Unifpenbound}
For all $(x_\modelNoise(0),\modelNoise)\in \overline\stateSpace \times L^2((0,t),\R^\NmodelNoiseSpace)$:
\begin{enumerate}
    \item[(i)]\label{eq:penunif} $\sup_{\kappa > 0} \sup_{0 \leq s \leq t} \lvert x^\penal_{\modelNoise}(s) \rvert \leq C$,
    \item[(ii)] $\sup_{\kappa > 0} \sup_{0 \leq s \leq t} \int_0^t \lvert \regpen ( x^\penal_{\modelNoise}(s) ) \rvert \dd s \leq C$, 
    \item[(iii)] $\sup_{0 \leq s \leq t} \mathrm{dist} (x^\penal_{\modelNoise}(s), \overline\stateSpace ) \leq C \kappa^{-1/2}$, 
\end{enumerate}
for a constant $C >0$ that only depends on $(t,\lVert \modelNoise \rVert_{L^2})$.
\end{lemma}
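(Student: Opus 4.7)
The natural object for this proof is the Moreau--Yosida envelope $\phi_\kappa(x) \triangleq \frac{\kappa}{2}\mathrm{dist}(x,\overline\stateSpace)^2 = \frac{\kappa}{2}\lvert x - \pi(x)\rvert^2$, whose gradient is exactly $\nabla \phi_\kappa = \regpen$. My plan is to chain energy estimates in the order (i) $\Rightarrow$ (iii) $\Rightarrow$ (ii): first obtain a $\kappa$-independent bound on $|x^\kappa_\omega|$ to kill the Lipschitz growth of $b,\sigma$, then use this to control $\phi_\kappa(x^\kappa_\omega)$ via a Gronwall-free integral inequality, and finally derive the $L^1$ bound on $\regpen$ from the resulting $L^2$ bound.

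For (i), translate so that $0 \in \stateSpace$ without loss of generality. The standard obtuse-angle characterisation of the projection on the convex $\overline\stateSpace$ yields $\pi(x)\cdot (x-\pi(x)) \geq 0$, whence
\begin{equation*}
x \cdot \regpen(x) \;=\; \kappa\, |x - \pi(x)|^2 + \kappa\, \pi(x) \cdot (x-\pi(x)) \;\geq\; 0.
\end{equation*}
Differentiating $\tfrac12|x^\kappa_\omega|^2$ along \eqref{eq:dynsyspen} and dropping the non-positive $-x^\kappa_\omega \cdot \regpen(x^\kappa_\omega)$ term leaves $\tfrac{d}{ds}\tfrac12|x^\kappa_\omega|^2 \leq x^\kappa_\omega \cdot (b + \sigma\omega)$; the Lipschitz growth of $b,\sigma$ then gives an a.e. inequality of the form $\tfrac{d}{ds}|x^\kappa_\omega|^2 \leq C(1+|x^\kappa_\omega|^2)(1+|\omega|^2)$, and integral Gronwall closes (i) with a constant depending only on $t$, $\|\omega\|_{L^2}$, and the bounded initial datum.

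For (iii) and (ii), I would then compute
\begin{equation*}
\frac{d}{ds}\phi_\kappa(x^\kappa_\omega) \;=\; \regpen(x^\kappa_\omega) \cdot \dot{x}^\kappa_\omega \;=\; \regpen(x^\kappa_\omega) \cdot (b + \sigma\omega) \;-\; |\regpen(x^\kappa_\omega)|^2,
\end{equation*}
and apply Young's inequality to obtain $\frac{d}{ds}\phi_\kappa + \tfrac12|\regpen|^2 \leq \tfrac12|b+\sigma\omega|^2$. Step (i) upgrades the right-hand side to an integrable function $C(1+|\omega|^2)$ independent of $\kappa$. The crucial initialisation is that $x^\kappa_\omega(0) \in \overline\stateSpace$ forces $\phi_\kappa(x^\kappa_\omega(0)) = 0$; integrating from $0$ to $s$ thus gives simultaneously $\phi_\kappa(x^\kappa_\omega(s)) \leq C$ and $\int_0^t |\regpen(x^\kappa_\omega)|^2\,ds \leq C$. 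The first bound reads $\tfrac{\kappa}{2}\mathrm{dist}(x^\kappa_\omega(s),\overline\stateSpace)^2 \leq C$, which is (iii), and Cauchy--Schwarz converts the second into (ii).

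The only genuinely non-routine step is the sign assertion $x \cdot \regpen(x) \geq 0$, which rests on convexity and a translation placing the origin inside $\stateSpace$; everything else is an energy estimate exploiting that $\regpen = \nabla \phi_\kappa$ and that the dissipative penalisation term $-|\regpen|^2$ appears with the right sign. The construction of the extension of $b,\sigma$ outside $\overline\stateSpace$ (already built in the statement) is what lets us evaluate the Lipschitz growth bounds at the off-domain iterates $x^\kappa_\omega(s)$ without additional care.
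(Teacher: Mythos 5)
Your proof is correct, and it reaches (ii)--(iii) by essentially the same energy identity as the paper, but it handles (i) by a genuinely different mechanism. The paper never uses the sign of $x \cdot \regpen(x)$: instead it first derives, from the identity $p(x^\penal_\modelNoise(t)) + \penal \int_0^t |\nabla p|^2 = \int_0^t \nabla p \cdot (b+\sigma\omega)$ with $p = \tfrac12 \mathrm{dist}(\cdot,\overline\stateSpace)^2$, an a priori bound $\int_0^t |\regpen(x^\penal_\modelNoise)|\,\dd s \lesssim \sqrt{t}\,(\int_0^t |b+\sigma\omega|^2)^{1/2}$, plugs this into the squared integral form of the dynamics, and closes (i) by Gronwall; only then do the same two inequalities become the uniform bounds (ii)--(iii). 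You instead eliminate the penalisation term from the $|x^\penal_\modelNoise|^2$ estimate outright via the obtuse-angle property of the projection (after translating a point of $\stateSpace$ to the origin), which decouples (i) from any control of $\regpen$ and is arguably cleaner; the paper's route has the minor advantage of not needing the translation/monotonicity observation, at the cost of a slightly more entangled Gronwall step. Your treatment of (ii)--(iii) — differentiating $\phi_\kappa = \kappa p$ along the flow, Young's inequality, and the initialisation $\phi_\kappa(x^\penal_\modelNoise(0))=0$ — is the paper's computation up to replacing Cauchy--Schwarz by Young, and your passage from the resulting $L^2$ bound on $\regpen(x^\penal_\modelNoise)$ to the $L^1$ bound in (ii) by Cauchy--Schwarz in time matches the intended argument. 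No gaps.
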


\begin{proof}
First, we write that
\begin{equation} \label{eq:majpen}
\lvert x^\penal_{\modelNoise}(t) \rvert \leq \lvert x_\modelNoise(0) \rvert + \int_0^t \lvert \regpen ( x^\penal_{\modelNoise}(s) ) \rvert + \lvert b(s,x^\penal_{\modelNoise}(s) ) + \sigma (s,x^\penal_{\modelNoise}(s) ) \omega(s) \rvert \dd s. 
\end{equation} 
We define $p(x) \triangleq \tfrac{1}{2} \mathrm{dist}(x,\overline\stateSpace)^2 = \tfrac{1}{2} (x - \proj(x))^2$.
Since $\overline\stateSpace$ is closed and convex, $p$ is $C^1$ with $\nabla p = \penal^{-1} \regpen$, so that 
\begin{equation*} 
p( x^\penal_{\modelNoise}(t) ) + \penal \int_0^t \lvert \nabla p( x^\penal_{\modelNoise}(s) ) \rvert^2 \dd s
= \int_0^t \nabla p ( x^\penal_{\modelNoise}(s) ) \cdot [ b(s,x^\penal_{\modelNoise}(s) ) + \sigma (s,x^\penal_{\modelNoise}(s) ) \omega(s) ] \dd s, \end{equation*}
and then 
\begin{subequations}
\begin{align}
	    &\bigg( \int_0^t \lvert \nabla p( x^\penal_{\modelNoise}(s) ) \rvert^2 \dd s \bigg)^{1/2} \leq \penal^{-1} \bigg( \int_0^t \lvert b(s,x^\penal_{\modelNoise}(s) ) + \sigma (s,x^\penal_{\modelNoise}(s) ) \omega(s) \rvert^2 \dd s \bigg)^{1/2},\label{eq:majpencorr}\\
    &p( x^\penal_{\modelNoise}(t) ) \leq \penal^{-1} \bigg( \int_0^t \lvert b(s,x^\penal_{\modelNoise}(s) ) + \sigma (s,x^\penal_{\modelNoise}(s) ) \omega(s) \rvert^2 \dd s \bigg)^{1/2}.\label{eq:majpendist}
\end{align}
\end{subequations}
Plugging the square of \eqref{eq:majpencorr} into the square of \eqref{eq:majpen}, the Gronwall lemma yields $(i)$ using that coefficients are Lipschitz. 
Plugging $(i)$ into \eqref{eq:majpencorr}-\eqref{eq:majpendist} then yields $(ii)$-$(iii)$. 
\end{proof}

\begin{lemma}[Convergence of penalised curves] \label{lem:pentrajCV}
For every $(x_\modelNoise(0),\modelNoise)$ in $\overline\stateSpace \times L^2((0,t),\R^\NmodelNoiseSpace)$,
\[ \sup_{0 \leq s \leq t} \lvert  x^\penal_{\modelNoise}(s) -  x_{\modelNoise}(s) \rvert \leq C \kappa^{-1/4}, \]
for a constant $C >0$ that only depends on $(t,\lVert \modelNoise \rVert_{L^2})$.
\end{lemma}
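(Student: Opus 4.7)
The plan is to quantify the error $e(s) \triangleq \state^\kappa_{\modelNoise}(s) - \state_{\modelNoise}(s)$ via an energy estimate on $|e|^2$, exploiting the monotonicity of the normal cone to $\overline{\stateSpace}$. By the well-posedness result underlying \eqref{eq:dynsys} (Brezis / Edmond-Thibault), since the source $\modelRHS(\cdot,\state_\modelNoise) + \sigma(\cdot,\state_\modelNoise)\modelNoise$ lies in $L^2$, there exists $\xi \in L^2((0,t);\RR^\NstateSpace)$ with $\xi(s) \in \normalcone{\overline{\stateSpace}}{\state_{\modelNoise}(s)}$ a.e.\ and $\dot \state_{\modelNoise}(s) = \modelRHS(s,\state_{\modelNoise}) + \sigma(s,\state_{\modelNoise})\modelNoise - \xi$. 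Subtracting this from \eqref{eq:dynsyspen}, using $e(0) = 0$ and the Lipschitz continuity of $\modelRHS$ and $\sigma$, a direct computation yields
\[
\tfrac{1}{2}\tfrac{d}{ds}|e(s)|^2 \leq -e(s) \cdot \bigl[\regpen(\state^\kappa_{\modelNoise}(s)) - \xi(s)\bigr] + L\bigl(1 + |\modelNoise(s)|\bigr)|e(s)|^2,
\]
where $L$ is a common Lipschitz constant.

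The crux is to show that the cross term satisfies $-e\cdot[\regpen(\state^\kappa_{\modelNoise}) - \xi] \leq C\kappa^{-1/2}|\xi|$ by two applications of the normal-cone inequality, using the projection $\proj$ as a pivot. Writing $e = [\proj(\state^\kappa_{\modelNoise}) - \state_{\modelNoise}] + [\state^\kappa_{\modelNoise} - \proj(\state^\kappa_{\modelNoise})]$, and using that $\xi \in \normalcone{\overline{\stateSpace}}{\state_{\modelNoise}}$ with $\proj(\state^\kappa_{\modelNoise}) \in \overline\stateSpace$, the first summand contributes non-positively, leaving
\[
e \cdot \xi \leq |\state^\kappa_{\modelNoise} - \proj(\state^\kappa_{\modelNoise})|\,|\xi| = \mathrm{dist}(\state^\kappa_{\modelNoise},\overline\stateSpace)\,|\xi| \leq C\kappa^{-1/2}|\xi|
\]
by Lemma \ref{lem:Unifpenbound}(iii). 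Symmetrically, since $\regpen(\state^\kappa_{\modelNoise}) = \kappa[\state^\kappa_{\modelNoise} - \proj(\state^\kappa_{\modelNoise})]$ lies (up to scaling) in $\normalcone{\overline{\stateSpace}}{\proj(\state^\kappa_{\modelNoise})}$ and $\state_{\modelNoise} \in \overline\stateSpace$, the same split produces
\[
-e \cdot \regpen(\state^\kappa_{\modelNoise}) = \kappa[\state_{\modelNoise} - \proj(\state^\kappa_{\modelNoise})] \cdot [\state^\kappa_{\modelNoise} - \proj(\state^\kappa_{\modelNoise})] - \kappa|\state^\kappa_{\modelNoise} - \proj(\state^\kappa_{\modelNoise})|^2 \leq 0,
\]
the first term being non-positive by the normal-cone inequality at $\proj(\state^\kappa_{\modelNoise})$.

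Inserting these bounds and applying Gronwall's lemma, with $\int_0^t |\xi|\,ds \leq \sqrt{t}\,\lVert \xi \rVert_{L^2}$ and $\int_0^t (1+|\modelNoise|)\,ds \leq t + \sqrt{t}\,\lVert \modelNoise \rVert_{L^2}$, both controlled in terms of $(t,\lVert \modelNoise \rVert_{L^2})$, we obtain $\sup_{0 \leq s \leq t} |e(s)|^2 \leq C\kappa^{-1/2}$, and taking square roots yields the claimed $\kappa^{-1/4}$ rate. The main obstacle is the two-step monotonicity argument: the key bookkeeping is that $\xi$ lives in the normal cone at $\state_{\modelNoise}$ while $\regpen(\state^\kappa_{\modelNoise})$ lives in the normal cone at $\proj(\state^\kappa_{\modelNoise})$, so each term must be compared using its own base point with the projection acting as the bridge. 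Notably, this argument only loses a factor of $\sqrt{\kappa}$ when transferring the normal-cone control through $\mathrm{dist}(\state^\kappa_{\modelNoise},\overline\stateSpace)$, which is precisely what produces the $\kappa^{-1/4}$ rate after square-rooting.
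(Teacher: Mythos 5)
Your argument is correct and reaches the same $\penal^{-1/4}$ rate, but by a genuinely different route. The paper applies Tanaka's comparison lemma for the Skorokhod problem to the decompositions $\proj(x^\penal_\modelNoise)=y^\penal+\varphi^\penal$ and $x_\modelNoise=y+\varphi$, treating the reflection terms as Stieltjes measures, integrating by parts, and only then invoking Lemma \ref{lem:Unifpenbound} and Gronwall; this never requires the reflection term of the constrained trajectory to have a pointwise density. You instead run a direct energy estimate on $|e|^2$ with $e=x^\penal_\modelNoise-x_\modelNoise$, using the two-sided monotonicity of the normal cone with $\proj(x^\penal_\modelNoise)$ as pivot: $\xi\in\normalcone{\overline\stateSpace}{x_\modelNoise}$ tested against $\proj(x^\penal_\modelNoise)\in\overline\stateSpace$, and $\regpen(x^\penal_\modelNoise)\in\normalcone{\overline\stateSpace}{\proj(x^\penal_\modelNoise)}$ tested against $x_\modelNoise\in\overline\stateSpace$. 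Both cone inequalities are applied correctly, and the only loss is the $\mathrm{dist}(x^\penal_\modelNoise,\overline\stateSpace)\le C\penal^{-1/2}$ factor from Lemma \ref{lem:Unifpenbound}-(iii), which after Gronwall and a square root gives $\penal^{-1/4}$ exactly as claimed. Your approach is more elementary (no Tanaka lemma), but it leans on one extra ingredient you should make explicit: the representation $\dot x_\modelNoise=\modelRHS+\sigma\modelNoise-\xi$ with $\xi\in L^2$ and a bound on $\lVert\xi\rVert_{L^2}$ depending only on $(t,\lVert\modelNoise\rVert_{L^2})$. Be careful not to derive this from Corollary \ref{cor:Hölder}, whose proof in the paper uses the present lemma; instead it follows circularity-free from the minimal-selection property of the sweeping dynamics ($\dot x_\modelNoise$ is a.e.\ the element of minimal norm of $\modelRHS+\sigma\modelNoise-\normalcone{\overline\stateSpace}{x_\modelNoise}$, hence $|\dot x_\modelNoise|\le|\modelRHS+\sigma\modelNoise|$ and $|\xi|\le 2|\modelRHS+\sigma\modelNoise|$ a.e.), together with the boundedness of $\overline\stateSpace$. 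With that one sentence added, the proof is complete.
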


\begin{proof} 
We define the curves $y$, $\overline{y}^\penal$ and $y^\penal$ by
\begin{subequations}
\begin{align*}
	   &y(t) \triangleq x_\modelNoise(0) + \int_0^t b(s,x_{\modelNoise}(s) ) + \sigma (s,x_{\modelNoise}(s) ) \omega(s) \dd s, \\
    &\overline{y}^\penal(t) \triangleq x_\modelNoise(0) + \int_0^t b(s,x^\penal_{\modelNoise}(s) ) + \sigma (s,x^\penal_{\modelNoise}(s) ) \omega(s) \dd s, \\
    &y^\penal(t) \triangleq \proj(x^\penal_{\modelNoise}(t)) - x^\penal_{\modelNoise}(t) + \overline{y}^\penal(t),
\end{align*}
\end{subequations}   
together with the correction terms
\[ \varphi(t)\triangleq x_{\modelNoise}(t) - y(t), \qquad \varphi^\penal (t) \triangleq -\int_0^t \regpen (x^\penal_{\modelNoise}(s)) \dd s.\]
We then apply \cite[Lemma 2.2]{tanaka1979stochastic} to $\proj(x^\penal_{\modelNoise})= y^\penal + \varphi^\penal$ and $x_{\modelNoise} = y + \varphi$:
\begin{align*}
\lvert \proj(x^\penal_{\modelNoise}(t)) &- x_{\modelNoise}(t) \rvert^2
\leq \lvert y^\penal (t) - y(t) \rvert^2 + 2\!\!\int_0^t [ y^\penal(t) - y(t) - y^\penal (s) + y(s) ] \dd(\varphi^k -\varphi)(s) \\
&\hspace{1cm}\leq \lvert y^\penal (t) - y(t) \rvert^2 
+ 2 \int_0^t [ y^\penal(t) - \overline{y}^\penal(t) - y^\penal (s) + \overline{y}^\penal (s) ] \dd(\varphi^k -\varphi)(s) \\
&\hspace{3.9cm}+ 2 \int_0^t [ \overline{y}^\penal(t) - y(t) - \overline{y}^\penal (s) + y(s) ] \dd(\varphi^k -\varphi)(s).
\end{align*} 
The first integral is bounded by
\[ 2 \sup_{0 \leq s \leq t} \lvert \proj(x^\penal_{\modelNoise}(s)) - x^\penal_{\modelNoise}(s) \rvert \int_0^t \dd \lvert \varphi^\penal \rvert (s) + \dd \lvert \varphi \rvert (s) \leq C \kappa^{-1/2}, \]
where we used Lemma \ref{lem:Unifpenbound}-$(iii)$, and
Lemma \ref{lem:Unifpenbound}-$(ii)$ to bound $$\int_0^t \dd \lvert \varphi^\penal \rvert (s) = \int_0^t \lvert \regpen ( x^\penal_{\modelNoise}(s) ) \rvert \dd s$$ uniformly in $\penal$.
Integrating by parts, the second integral becomes
\begin{align*}
\int_0^t [ \varphi^k(s) - \varphi(s) ] \dd (\overline{y}^\penal &- y)(s) = \int_0^t [ x^\penal_{\modelNoise}(s) - x_{\modelNoise}(s) - \overline{y}^\penal(s) + y(s) ] \dd (\overline{y}^\penal - y)(s) \\
&= \int_0^t [ x^\penal_{\modelNoise}(s) - x_{\modelNoise}(s) ] \dd (\overline{y}^\penal - y)(s) - \frac{1}{2} \lvert \overline{y}^\penal(t) - y(t) \rvert^2.
\end{align*} 
We now notice that $\overline{y}^\penal -y$ is absolutely continuous w.r.t. the Lebesgue measure, and we bound the remaining integral by
\begin{equation*}
2 \int_0^t \lvert x^\penal_{\modelNoise}(s) - x_{\modelNoise}(s) \rvert^2 + \lvert b(s,x^\penal_{\modelNoise}(s))
+ \sigma (s,x^\penal_{\modelNoise}(s)) \omega (s) - b(s,x_{\modelNoise}(s)) - \sigma (s,x_{\modelNoise}(s)) \omega (s) \rvert^2 \dd s,
\end{equation*} 
where we also used that $ab \leq 2(a^2 + b^2)$. 
Lemma \ref{lem:Unifpenbound}-$(iii)$ gives $C' > 0$ such that
\[ \lvert x^\penal_{\modelNoise}(s) - x_{\modelNoise}(s) \rvert \leq C' \kappa^{-1/2} + \lvert \proj ( x^\penal_{\modelNoise}(s) ) - x_{\modelNoise}(s) \rvert. \]
Gathering all the terms and using that coefficients are Lipschitz, the Gronwall lemma gives the desired result.
\end{proof}

\begin{corollary}[Time-regularity] \label{cor:Hölder}
For every $(x_\modelNoise(0),\modelNoise)$ in $\overline\stateSpace \times L^2((0,t),\R^\NmodelNoiseSpace)$,
\begin{itemize}
    \item[(i)] $\forall s \in [0,t], \quad \int_s^t \rvert \dot{x}_\omega(r) \vert^2 \dd r \leq C (t-s) + \int_s^t \lvert \omega(r) \rvert^2 \dd r,$
    \item[(ii)] $\forall r,s \in [0,t], \quad \vert x_{\modelNoise}(r) - x_{\modelNoise}(s) \vert \leq C \vert r- s \vert^{1/2},$
\end{itemize}
for a constant $C >0$ that only depends on $(t,\lVert \modelNoise \rVert_{L^2})$.
\end{corollary}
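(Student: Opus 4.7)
The plan is to prove (i) first and deduce (ii) from it by Cauchy--Schwarz. For (i), the strategy is to work with the smooth penalised trajectories $x^\penal_\omega$ of \eqref{eq:dynsyspen}, for which an energy identity is available, and then to pass to the $\penal \to +\infty$ limit thanks to Lemma \ref{lem:pentrajCV}.

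\textbf{Step 1: energy identity for the penalised flow.} I test \eqref{eq:dynsyspen} against $\dot{x}^\penal_\omega$ and integrate over $[0,t]$. Using that $\regpen = \penal \nabla p$ with $p(x) = \tfrac12\mathrm{dist}(x,\overline\stateSpace)^2$, the penalty contribution integrates to a telescoping term:
\[
\int_0^t |\dot{x}^\penal_\omega(r)|^2\,\dd r + \penal\bigl[p(x^\penal_\omega(t)) - p(x^\penal_\omega(0))\bigr] = \int_0^t \bigl(b(r,x^\penal_\omega(r)) + \sigma(r,x^\penal_\omega(r))\omega(r)\bigr)\cdot \dot{x}^\penal_\omega(r)\,\dd r.
\]
Since $x^\penal_\omega(0) = x_\omega(0) \in \overline\stateSpace$, the boundary term $\penal\, p(x^\penal_\omega(0))$ vanishes, and Young's inequality $v\cdot w \leq \tfrac12|v|^2 + \tfrac12|w|^2$ absorbs half of the left-hand integral, giving
\[
\int_0^t |\dot{x}^\penal_\omega(r)|^2\,\dd r \leq \int_0^t \bigl|b(r,x^\penal_\omega(r)) + \sigma(r,x^\penal_\omega(r))\omega(r)\bigr|^2\,\dd r \leq C\,t + C \int_0^t |\omega(r)|^2\,\dd r,
\]
where the last bound uses Lemma \ref{lem:Unifpenbound}-(i) to confine $x^\penal_\omega$ in a $\penal$-independent compact set, combined with the Lipschitz (hence locally bounded) character of $b$ and $\sigma$; the constant $C$ depends only on $(t,\|\omega\|_{L^2})$.

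\textbf{Step 2: passage to the limit.} The uniform $L^2$-bound from Step~1 allows me to extract a subsequence $\dot{x}^\penal_\omega \rightharpoonup v$ weakly in $L^2(0,t)$. Testing weak convergence against the indicator $\mathbf{1}_{[0,r]}$ and using the uniform convergence $x^\penal_\omega \to x_\omega$ from Lemma \ref{lem:pentrajCV} identifies $v = \dot{x}_\omega$ almost everywhere. Weak lower semi-continuity of the $L^2$-norm on the left and dominated convergence on the right (again via the uniform convergence of $x^\penal_\omega$ and continuity of $b,\sigma$) then yield
\[
\int_0^t |\dot{x}_\omega(r)|^2\,\dd r \leq \int_0^t \bigl|b(r,x_\omega(r)) + \sigma(r,x_\omega(r))\omega(r)\bigr|^2\,\dd r \leq C\,t + C\int_0^t |\omega(r)|^2\,\dd r.
\]

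\textbf{Step 3: localisation on $[s,t]$.} For general $s \in [0,t]$, I apply Steps~1--2 to the time-shifted trajectory $\tau \mapsto x_\omega(s+\tau)$ on $[0,t-s]$, which is again an absolutely continuous solution of the sweeping process \eqref{eq:dynsys} with initial datum $x_\omega(s)\in\overline\stateSpace$ and noise $\omega(s+\cdot) \in L^2(0,t-s)$. The change of variable $r=s+\tau$ yields (i). Finally, Cauchy--Schwarz combined with (i) gives, for $r,s \in [0,t]$,
\[
|x_\omega(r) - x_\omega(s)| \leq |r-s|^{1/2}\Bigl(\int_{\min(r,s)}^{\max(r,s)} |\dot{x}_\omega(r')|^2 \,\dd r'\Bigr)^{1/2} \leq C'\,|r-s|^{1/2},
\]
which is (ii).

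The main technical point is Step~2: identifying the weak $L^2$-limit of $\dot{x}^\penal_\omega$ as $\dot{x}_\omega$, and ensuring convergence of the right-hand side integrand. Both rely on the strong (uniform) convergence $x^\penal_\omega \to x_\omega$ provided by Lemma \ref{lem:pentrajCV}, so no additional fine analysis of the sweeping process is required.
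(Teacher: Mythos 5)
Your proof is correct and follows essentially the same route as the paper: derive a $\kappa$-uniform $H^1$ bound on the penalised trajectories, pass to the limit by weak convergence and lower semicontinuity of the $L^2$-norm of the derivative (using Lemma \ref{lem:pentrajCV}), then deduce (ii) from (i) by Cauchy--Schwarz. The only (harmless) variation is in how the penalty term is controlled: you exploit the gradient structure $\regpen = \penal\nabla p$ to make it a nonnegative telescoping term, and you handle the interval $[s,t]$ by time-shifting, whereas the paper squares the ODE directly on $(s,t)$ and bounds $\lVert \regpen(x^\penal_\modelNoise)\rVert_{L^2}$ via \eqref{eq:majpencorr}.
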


\begin{proof}
Starting from the penalised dynamics:
\[ \dot{x}^\penal_{\modelNoise}(r) = -\regpen ( x^\penal_{\modelNoise}(r) ) + b(r,x^\penal_{\modelNoise}(r) ) + \sigma (r,x^\penal_{\modelNoise}(r) ) \omega(r). \]
We now take the square and we integrate.
Using Lemma \ref{lem:Unifpenbound}-$(i)$ and \eqref{eq:majpencorr} to bound the penalisation, we obtain a bound on $x^\penal_{\modelNoise}$ in $H^1((s,t),\RR^\NstateSpace)$ that does not depend on $\kappa$.
Since $x^\penal_{\modelNoise}$ uniformly converges towards $x_{\modelNoise}$, this bound implies weak convergence towards $x_{\modelNoise}$ in $H^1((s,t),\RR^\NstateSpace)$. 
The $H^1$-norm being lower semi-continuous w.r.t. weak convergence, this proves $(i)$.  
Using the Cauchy-Schwarz inequality, $(ii)$ is a consequence of $(i)$. 
\end{proof}

\begin{proof}[Proof of Theorem \ref{thm:CVCostpen}]
From Lemma \ref{lem:pentrajCV}-$(i)$, $\costcomepen$ can be bounded by a constant $M > 0$ uniformly in $\kappa$.
As a consequence, we can restrict the minimisation \eqref{eq:costcomepen} to controls $\modelNoise$ with square $L^2$-norm lower than $2 M$. 
We then plug the result of Lemma \ref{lem:pentrajCV} into the minimisation \eqref{eq:costcomepen}: since $\psi$ and $h$ are Lipschitz-continuous, this completes the proof.
\end{proof}

\begin{proof}[Proof of Corollary \ref{cor:GammaCV}]
Let $(\hat{x}^\kappa (t) )_{\kappa > 0}$ be a family of minimisers for $x \mapsto \costcomepen (t,x)$.
As in the above proof, we restrict the minimisation \eqref{eq:costcomepen} to controls $\modelNoise$ with square $L^2$-norm lower than $2 M$.
The coefficients being Lipschitz, it is standard to show that $\costcomepen (t,\hat{x}^\kappa (t))$ is realised by some $\omega^\kappa \in L^2 (0,t)$ with $\lVert \omega^\kappa \rVert^2_{L^2(0,t)} \leq 2M$. 
Lemma \ref{lem:Unifpenbound}-$(i)$ then shows that $(\hat{x}^\kappa (t) )_{\kappa > 0}$ is bounded, hence pre-compact.
Since uniform convergence implies $\Gamma$-convergence, the result follows.
\end{proof}

\subsection{Viscosity solution} \label{subsec:visco}

The key-ingredient is the following dynamic programming principle.

\begin{lemma}[Bellman principle] \label{lem:Bellman}
For any $x$ in $\overline\stateSpace$ and $0 \leq t-\tau \leq t \leq T$, the dynamic programming holds:
\begin{equation*}\label{eq:Bellman}
\costcome(t,x) = \inf_{(x_\modelNoise (0), \modelNoise) \in \mathcal{A}^\stateSpace_{t, x}} \costcome (t-\tau,\state_\modelNoise (t-\tau)) + \int_{t-\tau}^t \timeError (s, \state_\modelNoise(s), \modelNoise(s)) \diff s.
\end{equation*}
\end{lemma}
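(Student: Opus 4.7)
The plan is to establish the dynamic programming principle by proving the two inequalities $\leq$ and $\geq$ separately, using a concatenation argument on the control $\omega$ and the well-posedness of \eqref{eq:dynsys} mentioned in the paper (namely, the existence and uniqueness of absolutely continuous solutions for any square-integrable $\omega$ and any starting point in $\overline{G}$).

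For the inequality $\costcome(t,x) \leq \inf_{\mathcal{A}^\stateSpace_{t,x}} \bigl\{ \costcome(t-\tau, x_\omega(t-\tau)) + \int_{t-\tau}^t \timeError(s, x_\omega(s), \omega(s))\, \diff s \bigr\}$, I would fix an arbitrary pair $(x_\omega(0), \omega) \in \mathcal{A}^\stateSpace_{t,x}$ and an arbitrary $\varepsilon > 0$. Choose $(z_0, \eta) \in \mathcal{A}^\stateSpace_{t-\tau,\, x_\omega(t-\tau)}$ such that $\psi(z_0) + \int_0^{t-\tau} \timeError(s, z_\eta(s), \eta(s))\, \diff s \leq \costcome(t-\tau, x_\omega(t-\tau)) + \varepsilon$. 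Concatenate the controls by setting $\tilde\omega(s) = \eta(s)$ for $s \in (0, t-\tau)$ and $\tilde\omega(s) = \omega(s)$ for $s \in (t-\tau, t)$; the result belongs to $L^2((0,t);\R^r)$. Starting from $z_0$, the induced trajectory follows $z_\eta$ on $[0, t-\tau]$ and hence reaches $x_\omega(t-\tau)$. By the uniqueness of the forward solution of \eqref{eq:dynsys} on $[t-\tau, t]$ with control $\omega|_{[t-\tau,t]}$ and initial state $x_\omega(t-\tau)$, the trajectory coincides with $x_\omega$ on $[t-\tau, t]$, so it ends at $x$, yielding $(z_0, \tilde\omega) \in \mathcal{A}^\stateSpace_{t,x}$. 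Plugging this into the definition of $\costcome(t,x)$ and letting $\varepsilon \to 0$ then taking the infimum over $(x_\omega(0), \omega)$ gives the desired inequality.

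For the reverse inequality $\geq$, I would pick $\varepsilon > 0$ and select a near-minimiser $(x_\omega^\varepsilon(0), \omega^\varepsilon) \in \mathcal{A}^\stateSpace_{t,x}$ satisfying $\psi(x_{\omega^\varepsilon}(0)) + \int_0^t \timeError(s, x_{\omega^\varepsilon}(s), \omega^\varepsilon(s))\, \diff s \leq \costcome(t,x) + \varepsilon$. The restriction of $\omega^\varepsilon$ to $[0, t-\tau]$ together with the initial condition $x_{\omega^\varepsilon}(0)$ belongs to $\mathcal{A}^\stateSpace_{t-\tau,\, x_{\omega^\varepsilon}(t-\tau)}$, so by definition of the cost-to-come,
\begin{equation*}
\costcome(t-\tau, x_{\omega^\varepsilon}(t-\tau)) \leq \psi(x_{\omega^\varepsilon}(0)) + \int_0^{t-\tau} \timeError(s, x_{\omega^\varepsilon}(s), \omega^\varepsilon(s))\, \diff s.
\end{equation*}
Adding $\int_{t-\tau}^t \timeError(s, x_{\omega^\varepsilon}(s), \omega^\varepsilon(s))\, \diff s$ on both sides shows that the specific pair $(x_{\omega^\varepsilon}(0), \omega^\varepsilon)$ is admissible in the infimum on the right-hand side and gives a value at most $\costcome(t,x) + \varepsilon$. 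Letting $\varepsilon \to 0$ completes the proof.

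The only subtle point is the forward concatenation in the $\leq$ direction, where one must invoke the uniqueness of the reflected dynamics on $[t-\tau,t]$ to guarantee that the glued trajectory indeed reaches $x$ at time $t$. This is straightforward thanks to the well-posedness statement recalled just before the definition of the cost-to-come, so no real obstacle arises; the rest is bookkeeping on the running cost, which splits additively over $[0, t-\tau]$ and $[t-\tau, t]$ because $\timeError$ depends only locally on $(s, x_\omega(s), \omega(s))$.
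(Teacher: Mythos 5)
Your proof is correct and follows essentially the same route as the paper's: both inequalities are obtained by concatenating controls and invoking the forward uniqueness of the reflected dynamics from \cite{EdmondThibault2005} to ensure the glued trajectory still terminates at $x$, which is exactly the subtlety the paper also flags. The only cosmetic difference is that you use an $\varepsilon$-minimiser for $\costcome(t-\tau,\cdot)$ in the $\leq$ direction where the paper takes an arbitrary competitor and passes to the infimum afterwards; the two are interchangeable.
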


\begin{proof} 
The proof relies on classical arguments; however, given that we are working with backward processes and that the trajectory does not necessarily admit a unique backward solution for each control, we prefer to provide the proof here.

Let $(x_\modelNoise(0),\modelNoise)$ belong to $\mathcal{A}^\stateSpace_{t, x}$. For any $(x_{\modelNoise'}(0),\modelNoise') \in \mathcal{A}_{t-\tau,\state_\modelNoise(t-\tau)}$, we define
\[
\tilde\modelNoise(s) \triangleq
\begin{cases}
\modelNoise'(s) &\text{ if } 0 \leq s \leq t - \tau, \\
\modelNoise(s) &\text{ if } t - \tau < s \leq t.
\end{cases} 
\]
By construction, $(x_{\modelNoise'}(0),\tilde\modelNoise') $ belongs to $\mathcal{A}^\stateSpace_{t, x}$, and according to
\cite{EdmondThibault2005}, there exists a unique 
trajectory $x_{\modelNoise'}$ emanating from $x_{\modelNoise'}(0)$
with the control $ \tilde\modelNoise$.
Hence, $x_{\tilde\modelNoise}\equiv x_\modelNoise'$ on $[0,t-\tau]$ and $x_{\tilde\modelNoise}\equiv x_\modelNoise$ on $[t-\tau,T]$, and we have
\[ \costcome(\state,t) \leq \psi(x_{\modelNoise'}(0)) + \int^{t-\tau}_0 \timeError(s,x_{\modelNoise'}(s),\omega'(s)) + \int_{t-\tau}^t \timeError (\state_\modelNoise(s), \modelNoise(s), s ) \diff s. \]
This inequality being true for every 
 $(x_{\modelNoise'}(0),\modelNoise') \in \mathcal{A}_{t-\tau,\state_\modelNoise(t-\tau)}$, we get that
\[ \costcome(t,\state) \leq \costcome(t-\tau,\state_\modelNoise(t-\tau)) + \int_{t-\tau}^t \timeError (s,\state_\modelNoise(s), \modelNoise(s)) \diff s. \]
Moreover, for every $\varepsilon > 0$, some $(x_{\modelNoise}(0),\modelNoise) \in \mathcal{A}^\stateSpace_{t, x}$ exists such that
\begin{align*}
\costcome(t,\state) &+ \varepsilon \\
&\geq \psi(x_{\modelNoise}(0)) + \int_0^{t- \tau} \timeError(s,x_{\modelNoise}(s),\modelNoise (s)) + \int_{t-\tau}^t \timeError (s,\state_\modelNoise(s), \modelNoise(s) ) \diff s \\
&\geq \costcome(t-\tau,\state_\modelNoise(t-\tau)) + \int_{t-\tau}^t \timeError (s,\state_\modelNoise(s), \modelNoise(s)) \diff s,
\end{align*} 
using that $(x_\modelNoise(0),\modelNoise\rvert_{[0,t-\tau]}) \in \mathcal{A}^\stateSpace_{t-\tau,\state_\modelNoise(t-\tau)}$. 
Since such a $(x_\modelNoise(0),\modelNoise)$ exists for every $\varepsilon>0$, this completes the proof.
\end{proof}
 We now turn to the proof of Theorem \ref{thm:Visco}.
\begin{proof}[Proof for the sub-solution part]
Given $t >0 $ and $x \in \overline\stateSpace$, let $\varphi : \RR_+ \times \overline\stateSpace \rightarrow \RR$ be a $C^1$ test function such that $\costcome-\varphi$ has a local maximum at $(t,x)$.
We now use that the control additively enters the dynamics ($\sigma \equiv \mathrm{Id})$.
If $x \in \stateSpace$, for every every $\tilde\modelNoise \in \RR^\NstateSpace$ we can find $(x_{\modelNoise}(0),\modelNoise) \in \mathcal{A}^\stateSpace_{t,x}$ such that $\modelNoise$ is continuous, $\modelNoise(t) = \tilde\modelNoise$ and $x_\modelNoise(s) \in \stateSpace$ for $s < t$.
If $x \in \partial\stateSpace$, this is still possible provided that $[b(t,x) + \tilde\modelNoise] \cdot n(x) \geq 0$.
From the local maximum condition,
\begin{align*}
\varphi(t,x) - \varphi (t-\tau,  \state_{\modelNoise}( t - \tau ) ) &\leq \costcome(t,x) - \costcome (t-\tau,  \state_{\modelNoise}( t - \tau ) ) \\
&\leq  \int_{t-\tau}^t \timeError ( s,\state_{\modelNoise} (s),{\modelNoise}(s) ) \diff s,   
\end{align*}
where the second inequality stems from Lemma \ref{lem:Bellman}.
By construction of $\omega$, $s \mapsto \state_{\modelNoise}(s)$ is differentiable at $t$.
Dividing by $\tau$ and taking the $\tau \rightarrow 0^+$ limit gives that
\begin{equation*} 
\partial_t \varphi(t,x) + [ b(t,x) + \tilde\modelNoise ] \cdot \nabla \varphi(t,x) - \frac{1}{2} \vert \tilde\modelNoise \vert^2 - \frac{1}{2} \vert \dot{y}(t) - h(t,x) \vert^2 \leq 0. 
\end{equation*}
If $x \in \stateSpace$, maximising over $\tilde\omega \in \RR^\NstateSpace$ gives the sub-solution property; moreover, the maximum is realised by $\tilde\omega = \nabla \varphi(t,x)$.
If $x \in \partial\stateSpace$, from the definition of sub-solution \cite[Section 2]{barles1991fully}, we can assume that $[ b(t,x) + \nabla \varphi(t,x) ] \cdot n(x) \geq 0$.
This allows us to take $\tilde\modelNoise = \nabla \varphi(t,x)$ to realise the maximum. In both cases, we obtained the sub-solution property.
\end{proof}

\begin{proof}[Proof for the super-solution part]
Given $(t,x)$ in $\RR_+ \times \overline\stateSpace$, we consider a $C^1$ test function $\varphi : \RR_+ \times \overline{\stateSpace} \rightarrow \RR$ such that $\costcome- \varphi$ has a local minimum at $(t,x)$.
Positive numbers $\delta,h>0$ exist such that 
\begin{equation} \label{MinVisc}
 \vert t-t'\vert \leq \delta \text{ and } \vert x-x'| \leq h \Rightarrow \costcome(t',x') - \varphi(t',x') \geq \costcome ( t,x ) - \varphi ( t,x). 
\end{equation}    
Fix now $\varepsilon > 0$.
Since $\costcome$ is bounded on $\overline{\stateSpace}$ by some $M >0$, we can restrict the minimisation \eqref{eq:costcome} to controls $\modelNoise$ with square $L^2$-norm lower than $2 M$.
From Corollary \ref{cor:Hölder}-$(ii)$,
$s \mapsto x_\modelNoise(s)$ is continuous on $[0,t]$ uniformly in $(x_\modelNoise(0),\modelNoise) \in \mathcal{A}^\stateSpace_{t,x}$ such that $\lVert \modelNoise \rVert^2_{L^2(0,t)} \leq 2M$.
This provides $\eta > 0$ such that for every $(x_\modelNoise(0),\modelNoise) \in \mathcal{A}^\stateSpace_{t,x}$ with $\lVert \modelNoise \rVert^2_{L^2(0,t)} \leq 2M$,
\[  0 \leq \tau \leq \eta \Rightarrow | \state_{\modelNoise}(t-\tau) - \state | \leq h. \]
Let $(\tau_k)_{k \geq 1}$ be a sequence that converges to $0$ with $0 < \tau_k \leq \min ( \delta , \eta) $. 
In Lemma \ref{lem:Bellman}, it is sufficient to minimise over $(x_\modelNoise(0),\modelNoise) \in \mathcal{A}^\stateSpace_{t,x}$ with $\lVert \modelNoise\rVert^2_{L^2(0,t)} \leq 2M$.
By definition of the infimum, there exists $(x_{\modelNoise_k}(0),\modelNoise_k) \in \mathcal{A}^\stateSpace_{t,x}$ such that $\lVert \modelNoise_k \rVert^2_{L^2(0,t)} \leq 2M$ and
\begin{equation} \label{eq:unifSquareOm}
\costcome(t,x) + \varepsilon \tau_k \geq \costcome ( t-\tau_k, \state_{\modelNoise_k} (t-\tau_k) ) + \int_{t-\tau_k}^t \timeError (s, \state_{\modelNoise_k} (s),\modelNoise_k (s) ) \diff s.
\end{equation} 
Using \eqref{MinVisc}, we get that
\begin{align*} 
\varphi (t,x) - \varphi ( t-\tau_k,\state_{\modelNoise_k} (t-\tau_k) ) &\geq \costcome ( t,x) - \costcome (t-\tau_k,\state_{\modelNoise_k} (t-\tau_k) ) \\
&\geq - \varepsilon \tau_k + \int_{t-\tau_k}^t \timeError (s, \state_{\modelNoise_k}(s),\omega_k (s) ) \diff s.
\end{align*}
Since $\varphi$ is $C^1$ and $s \mapsto \state_{\modelNoise_k}(s)$ is absolutely continuous, this yields
\begin{equation} \label{eq:AbsCont}
\int_{t-\tau_k}^t \partial_s \varphi ( s,\state_{\modelNoise_k}(s) ) +\dot{\state}_{\modelNoise_k} (s) \cdot \nabla \varphi (s, \state_{\modelNoise_k}(s))
- \timeError ( s,\state_{\modelNoise_k}(s),\modelNoise_k (s)) \diff s \geq - \varepsilon \tau_k. 
\end{equation}
We now carefully handle the boundary.

\medskip

\emph{\textbf{Case $x \in \stateSpace$:}} the uniform in $k$ continuity of $x_{\omega_k}$ provides that for large enough $k$,
\[ \forall s \in [t - \tau_k, t], \quad \state_{\modelNoise_k}(s) \in \stateSpace, \quad \text{hence} \quad \dot\state_{\modelNoise_k}(s) = b(s,{\state}_{\modelNoise_k}(s))  + \modelNoise_k (s). \]
Moreover,
\begin{multline*} 
b(s,{\state}_{\modelNoise_k} (s) ) \cdot \nabla \varphi (s, \state_{\modelNoise_k}(s)) + \frac{1}{2} \vert \nabla \varphi (s, \state_{\modelNoise_k}(s)) \vert^2 \\
\geq [ b(s,{\state}_{\modelNoise_k}(s))  + \modelNoise_k (s) ] \cdot \nabla \varphi (s, \state_{\modelNoise_k}(s)) - \frac{1}{2} \vert \modelNoise_k (s) \vert^2,
\end{multline*} 
hence from \eqref{eq:AbsCont},
\begin{multline} \label{eq:SuperInt}
\int_{t-\tau_k}^t \big[ \partial_s \varphi ( s,\state_{\modelNoise_k}(s) ) + b(s,{\state}_{\modelNoise_k} (s) ) \cdot \nabla \varphi (s, \state_{\modelNoise_k}(s)) + \frac{1}{2} \vert \nabla \varphi (s, \state_{\modelNoise_k}(s)) \vert^2 \\
- \frac{1}{2} \vert \observ (s) - h (s, \state_{\modelNoise_k}(s) ) \vert^2 \big] \diff s \geq - \varepsilon \tau_k. 
\end{multline} 
Using again that $s \mapsto \state_{\modelNoise_k}(s)$ is continuous at $s = t$ uniformly in $k$, we divide by $\tau_k$ and we take the $k \rightarrow + \infty$ limit to obtain that
\[ \partial_t \varphi ( t,x ) + b(t,x) \cdot \nabla \varphi (t, x) + \frac{1}{2} \vert \nabla \varphi (t, x) \vert^2 - \frac{1}{2} \vert \observ (t) - h (t, \state ) \vert^2 \geq -\varepsilon. \]
Since this holds for every $\varepsilon >0$, this gives the super-solution property.

\medskip

\emph{\textbf{Case $x \in \partial\stateSpace$ and $\nabla \varphi (t,x) \cdot n(x) > 0$:}} since $\stateSpace$ has a $C^2$ boundary, there exists a neighbourhood $U$ of $x$ in $\R^\NstateSpace$ such that 
\[ \stateSpace \cap U = \{ y \in U \, , \, \gamma(y) < 0 \}, \quad \partial \stateSpace \cap U = \{ y \in U \, , \, \gamma(y) = 0 \}, \]
for a $C^2$ function $\gamma : U \rightarrow \R$ with $n(x) = \nabla \gamma (x)$. We now decompose 
\begin{multline*}
\dot{\state}_{\modelNoise_k} (s) \cdot \nabla \varphi (s,{\state}_{\modelNoise_k} (s)) = [\dot{\state}_{\modelNoise_k} (s) \cdot  \nabla \gamma ({\state}_{\modelNoise_k} (s) )][ \nabla \varphi (s, \state_{\modelNoise_k}(s)) \cdot  \nabla \gamma ({\state}_{\modelNoise_k} (s)] \\
+ \pi^\perp_{\nabla \gamma ({\state}_{\modelNoise_k} (s))} ( \dot{\state}_{\modelNoise_k} (s) ) \cdot \pi^\perp_{ \nabla \gamma ({\state}_{\modelNoise_k} (s))} ( \nabla \varphi (s, \state_{\modelNoise_k}(s)) ), \end{multline*} 
where $\pi_{\nabla \gamma}^\perp$ denotes the orthogonal projection on the hyperplane with normal vector $ \nabla \gamma$. 
To alleviate notations, we write $ \nabla \gamma$ instead of $\nabla \gamma ({\state}_{\modelNoise_k} (s))$.
We have $\pi^\perp_{\nabla \gamma} ( \dot{\state}_{\modelNoise_k} (s) ) = \pi^\perp_{\nabla \gamma} ( b(s,{\state}_{\modelNoise_k} (s)) ) + \pi^\perp_{\nabla \gamma} (\modelNoise_k(s))$ hence
\begin{multline} \label{eq:OptControlpi}
\pi^\perp_{\nabla \gamma} ( b(s,{\state}_{\modelNoise_k} (s) ) ) \cdot \pi^\perp_{\nabla \gamma} ( \nabla \varphi (s, \state_{\modelNoise_k}(s)) ) + \frac{1}{2} \vert \pi^\perp_{\nabla \gamma} ( \nabla \varphi (s, \state_{\modelNoise_k}(s)) ) \vert^2 \\ \geq [ \pi^\perp_{\nabla \gamma} ( b(s,{\state}_{\modelNoise_k}(s)) ) +  \pi^\perp_{\nabla \gamma} ( \modelNoise_k (s) ) ] \cdot \pi^\perp_{\nabla \gamma} ( \nabla \varphi (s, \state_{\modelNoise_k}(s)) ) - \frac{1}{2} \vert \pi^\perp_{\nabla \gamma} (  \modelNoise_k (s) ) \vert^2.
\end{multline} 
On the other hand, the uniform in $k$ continuity guarantees that for large enough $k$,
\[ \forall s \in [t - \tau_k, t], \quad \nabla \varphi (s,\state_{\modelNoise_k}(s)) \cdot {\nabla \gamma} (\state_{\modelNoise_k}(s)) \geq 0, \] 
so that, for almost every $s \in [t - \tau_k, t]$,
\[ [\dot{\state}_{\modelNoise_k} (s) \cdot {\nabla \gamma} ][ \nabla \varphi (s, \state_{\modelNoise_k}(s)) \cdot {\nabla \gamma} ] \leq [ b(s,{\state}_{\modelNoise_k}(s)) \cdot {\nabla \gamma} + \modelNoise_k (s) \cdot {\nabla \gamma} ][ \nabla \varphi (s, \state_{\modelNoise_k}(s)) \cdot {\nabla \gamma} ], \]
and we can reason as in \eqref{eq:OptControlpi}. 
Gathering all the terms in \eqref{eq:AbsCont} gives  \eqref{eq:SuperInt}, and we conclude as before.

\medskip

\emph{\textbf{Case $x \in \partial\stateSpace$ and $\nabla \varphi (t,x) \cdot n(x) = 0$:}} the Cauchy-Schwarz inequality yields
\begin{multline} \label{eq:CauchyVsic}
\bigg\vert \int^t_{t- \tau_k} [\dot{\state}_{\modelNoise_k} (s) \cdot {\nabla \gamma} ][ \nabla \varphi (s, \state_{\modelNoise_k}(s)) \cdot {\nabla \gamma} ] \dd s \bigg\rvert \\
\leq \bigg( \int^t_{t- \tau_k} [\dot{\state}_{\modelNoise_k} (s) \cdot {\nabla \gamma} ]^2 \dd s  \bigg)^{1/2} \bigg( \int_{t-\tau_k}^t [ \nabla \varphi (s, \state_{\modelNoise_k}(s)) \cdot {\nabla \gamma} ]^2 \dd s \bigg)^{1/2},
\end{multline} 
where we write $\nabla \gamma$ instead of ${\nabla \gamma} ( \state_{\modelNoise_k}(s) ) $ as in the previous case.
From Corollary \ref{cor:Hölder}-$(i)$, the first integral on  the r.h.s can be bounded in terms of $\int_{t-\tau_k}^t \lvert \modelNoise_k (s) \rvert^2 \dd s$.
From \eqref{eq:unifSquareOm},
using the continuity of $\costcome$ together with the continuity of $x_{\modelNoise_k}$ uniformly in $k$, this integral goes to $0$ as $k \rightarrow + \infty$. 

In the current viscosity setting, we can always assume that $\varphi$ is $C^2$.
Moreover, $\gamma$ is $C^2$ too.
Using $\nabla \varphi (t,x) \cdot \nabla \gamma (x) = 0$ and Corollary \ref{cor:Hölder}-$(ii)$, this yields $\vert \nabla \varphi (s, \state_{\modelNoise_k}(s)) \cdot {\nabla \gamma} ( \state_{\modelNoise_k}(s) )  \vert^2 \leq C \tau_k$, for $C >0$ independent of $(s,k)$.   
From \eqref{eq:CauchyVsic}, this implies
\[ \tau^{-1}_k \int^t_{t- \tau_k} [\dot{\state}_{\modelNoise_k} (s) \cdot {\nabla \gamma} ( \state_{\modelNoise_k}(s) )  ][ \nabla \varphi (s, \state_{\modelNoise_k}(s)) \cdot {\nabla \gamma} ( \state_{\modelNoise_k}(s) )  ( \state_{\modelNoise_k}(s) )  ] \dd s \xrightarrow[k \rightarrow +\infty]{} 0. \]
From \eqref{eq:AbsCont}-\eqref{eq:OptControlpi} and $\nabla \varphi (t,x) \cdot n (x) = 0$, reasoning as in the previous case concludes.

\medskip

\emph{\textbf{Case $x \in \partial\stateSpace$ and $\nabla \varphi (t,x) \cdot n(x) < 0$:}}  we notice that 
\[ \int_{t-\tau_k}^t \dot{\state}_{\modelNoise_k}(s) \nabla \gamma ( \state_{\modelNoise_k}(s) ) \dd s = \gamma (x) - \gamma ( \state_{\modelNoise_k}(t-\tau_k) ) \geq 0, \]
hence
\begin{multline*}
\int_{t-\tau_k}^t [ \dot{\state}_{\modelNoise_k}(s) \cdot \nabla \gamma ( \state_{\modelNoise_k}(s) ) ] [ \nabla \varphi ( \state_{\modelNoise_k}(s) ) \cdot \nabla \gamma ( \state_{\modelNoise_k}(s) ] \dd s \\
\leq \int_{t-\tau_k}^t [ \dot{\state}_{\modelNoise_k}(s) \cdot \nabla \gamma ( \state_{\modelNoise_k}(s) ) ] [ \nabla \varphi ( \state_{\modelNoise_k}(s) ) \cdot \nabla \gamma ( \state_{\modelNoise_k}(s) ) - \nabla \varphi ( t, x ) \cdot \nabla \gamma ( x ) ] \dd s.
\end{multline*} 
The integral on the r.h.s. can be handled as we did for \eqref{eq:CauchyVsic} to get that
\[ \limsup_{k\rightarrow+\infty} \tau_k^{-1} \int_{t-\tau_k}^t [ \dot{\state}_{\modelNoise_k}(s) \cdot \nabla \gamma ( \state_{\modelNoise_k}(s) ) ] [ \nabla \varphi ( \state_{\modelNoise_k}(s) ) \cdot \nabla \gamma ( \state_{\modelNoise_k}(s)) ] \dd s \leq 0. \]
Going back to \eqref{eq:AbsCont}-\eqref{eq:OptControlpi}, we reason as before, we send $k \rightarrow + \infty$ and then $\varepsilon \rightarrow 0$. 
This gives the incomplete property:
\[ \partial_t \varphi ( t,x ) + \pi^\perp_{n (x)} ( b(t,x)) \cdot \pi^\perp_{n (x)} ( \nabla \varphi (t, x) ) + \frac{1}{2} \vert \pi^\perp_{n (x)} ( \nabla \varphi (t, x) ) \vert^2 - \frac{1}{2} \vert \observ (t) - h (t, \state ) \vert^2 \geq 0. \]
The boundary condition being imposed in the viscosity sense \cite[Section 2]{barles1991fully}, we can assume that $b(t,x) \cdot n (x) + \tfrac{1}{2} \nabla \varphi (t,x) \cdot n(x) \leq 0$, so that 
\[ [ b(t,x) \cdot n (x) ] [ \nabla \varphi(t,x) \cdot n(x) ] + \frac{1}{2} \vert \nabla \varphi (t,x) \cdot n(x) \vert^2 \geq 0, \]
because $\nabla \varphi (t,x) \cdot n(x) \leq 0$. 
Adding this to the incomplete property concludes.
\end{proof}

\begin{remark}[Super-solution property at the boundary] \label{rem:allsuper}
When $b \cdot n \leq 0$ on $\partial G$, we deduce from the last case of the above proof that $\costcome$ is actually a viscosity super-solution of 
\[ \partial_t \costcome (t,x) + b(t,x) \cdot \nabla \costcome (t,x) + \frac{1}{2} \vert \nabla \costcome (t,x) \vert^2 - \frac{1}{2} \vert \dot{y}(t) - h(t,x) \vert^2 = 0, \]
on the full $[0,T] \times \overline{\stateSpace}$ for every $T >0$ (no more boundary condition).
The related comparison principle then enters the scope of \cite[Theorem III.2]{capuzzo1990hamilton}.
\end{remark}

\section{Small noise filtering for reflected dynamics} \label{sec:Limitfiltering}

This section is devoted to the proof of Theorem \ref{thm:refLaplace}.
Our approach is a simple instance of the general machinery developed in \cite{feng2006large}.
Let us a fix a continuous $\Phi : \overline{G} \rightarrow 0$ and $t >0$.
By regularisation and density, we can assume that $\Phi$ is defined and $C^1$ on a neighbourhood of $\overline\stateSpace$.
This allows us to apply \cite[Chapter 5, Corollary 2]{friedman2008partial} to get that 
\begin{equation*}
\begin{cases}
\partial_s \Phi^\varepsilon(s,x) + b(s,x) \cdot \nabla \Phi^\varepsilon(s,x) + \frac{\varepsilon}{2} \Delta \Phi^\varepsilon(s,x)  -\frac{1}{2 \varepsilon} \vert \dot{y}(s) - h(s,x) \vert^2 \Phi^\varepsilon(s,x) = 0,  \\
\Phi^\varepsilon(t,x) = e^{-\Phi(x)/\varepsilon}, \quad x \in \overline\stateSpace, \\
\frac{\partial\Phi^\varepsilon}{\partial n} (s,x) = 0, \quad (s,x) \in [0,t) \times \partial\stateSpace. 
\end{cases}
\end{equation*}
has a unique solution $\Phi^\varepsilon$ in $C([0,t] \times \overline\stateSpace) \cap C^{1,2}([0,t) \times \stateSpace)$.
From \cite[Lemma 3.2]{hucke1990nonlinear}, we get the duality relation
\begin{equation} \label{eq:duality}
\int_\stateSpace \Phi^\varepsilon (0,x) \tilde{q}^\varepsilon(t,x) \dd x = \int_\stateSpace \Phi^\varepsilon (t,x) \tilde{q}^\varepsilon(0,x) \dd x. 
\end{equation} 

The log-transform $V^\varepsilon_\Phi \triangleq -\varepsilon \log \Phi^\varepsilon$ then satisfies
\begin{equation*}
\begin{cases}
\partial_s V^\varepsilon_\Phi (s,x) + b(s,x) \cdot \nabla V^\varepsilon_\Phi (s,x) + \frac{\varepsilon}{2} \Delta V^\varepsilon_\Phi (s,x) - \frac{1}{2} \vert \nabla V^\varepsilon_\Phi (s,x) \vert^2 \\
\qquad\qquad\qquad\qquad+ \frac{1}{2} \vert \dot{y}(s) - h(s,x) \vert^2 = 0, \quad (s,x) \in [0,t) \times \stateSpace, \\ V^\varepsilon_\Phi(t,x) = \Phi(x), \quad x \in \overline\stateSpace, \\
\frac{\partial V_\Phi^\varepsilon}{\partial n} (s,x) = 0, \quad (s,x) \in [0,t) \times \partial\stateSpace. 
\end{cases}
\end{equation*}
From this, we proceed as in \cite[Section 5]{fleming1997deterministic} to give a control representation for $V^\varepsilon_\Phi$.
On a filtered probability space $(\Omega,\mathcal{F},\mathbb{P},(\mathcal{F}_s)_{0 \leq s \leq t})$, let $(B_s)_{0 \leq s \leq t}$ be an adapted Brownian motion. 
For $0 \leq s \leq t$, $x \in \overline\stateSpace$ and any square-integrable adapted process $(\alpha_r)_{s \leq r \leq t}$, we define the controlled reflected dynamics
\[ d Y^{\varepsilon,\alpha}_r + \model ( Y^{\varepsilon,\alpha}_r ) (\dd s) \ni b (s,Y^{\varepsilon,\alpha}_r) \dd r + \alpha_r \dd r + \sqrt{\varepsilon} \dd B_r, \quad s \leq r \leq t, \]
with initial condition $Y^{\varepsilon,\alpha}_s = x$. 
Since $V^\varepsilon_\Phi$ is $C^{1,2}$, a standard verification argument now gives that
\begin{equation} \label{eq:backwardVerif}
V^\varepsilon_\Phi (s,x) = \inf_{\substack{x,\alpha \\ Y^{\varepsilon,\alpha}_s = x}} \E \bigg[ \int_s^t \frac{1}{2} \vert \alpha_r \vert^2 + \frac{1}{2} \vert \dot{y}(r) - h(r,Y^{\varepsilon,\alpha}_r) \vert^2 \dd r + \Phi(Y^{\varepsilon,\alpha}_t) \bigg].
\end{equation}
The next result is the analogous of \cite[Lemma 5.1]{fleming1997deterministic} or Theorem \ref{thm:flemSmallNoise}.

\begin{lemma} \label{lem:RefQuantCV} 
There exists $C > 0$ independent of $\varepsilon$ such that
\[ \sup_{(s,x) \in [0,t] \times \overline\stateSpace} \vert V^\varepsilon_\Phi (s,x) - V^0_\Phi (s,x) \vert \leq C \varepsilon^{1/4}. \] 
\end{lemma}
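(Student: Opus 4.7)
The plan is to exploit the stochastic control representation \eqref{eq:backwardVerif} for $V^\varepsilon_\Phi$ together with its $\varepsilon=0$ deterministic analog
\[ V^0_\Phi(s,x) = \inf_{\substack{\alpha \in L^2((s,t);\RR^\NstateSpace), \\ Y^{0,\alpha}_s=x}} \int_s^t \tfrac{1}{2}|\alpha_r|^2 + \tfrac{1}{2}|\dot y(r) - h(r, Y^{0,\alpha}_r)|^2 \dd r + \Phi(Y^{0,\alpha}_t), \]
where $Y^{0,\alpha}$ solves the reflected ODE $\dot Y^{0,\alpha}_r + \partial\model(Y^{0,\alpha}_r) \ni b(r,Y^{0,\alpha}_r) + \alpha_r$ starting from $x$, and then to establish the two bounds $V^\varepsilon_\Phi \leq V^0_\Phi + C\varepsilon^{1/4}$ and $V^0_\Phi \leq V^\varepsilon_\Phi + C\varepsilon^{1/4}$ separately by coupling the two problems through the same control.

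As a preliminary, $\Phi$ is continuous on the compact set $\overline\stateSpace$ and hence bounded, so taking $\alpha\equiv 0$ in \eqref{eq:backwardVerif} gives a constant $M>0$ independent of $\varepsilon$ with $V^\varepsilon_\Phi,\,V^0_\Phi \leq M$; the infima may therefore be restricted to adapted controls with $\mathbb{E}\!\int_s^t |\alpha_r|^2 \dd r \leq 2M$. The core of the proof is then the coupling estimate: for any such $\alpha$ and any $x\in\overline\stateSpace$, writing $Y^{\varepsilon,\alpha}$ for the controlled reflected SDE of \eqref{eq:backwardVerif} and $Y^{0,\alpha}$ for the reflected ODE solved pathwise in $\omega$ with the same $\alpha$ and the same initial condition, I would show
\[ \mathbb{E}\Big[\sup_{s\leq r \leq t} |Y^{\varepsilon,\alpha}_r - Y^{0,\alpha}_r|^2\Big] \leq C\sqrt\varepsilon, \]
with $C$ depending only on $(t,M)$ and the Lipschitz constant of $b$. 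The argument follows the proof of Lemma \ref{lem:pentrajCV}: apply \cite[Lemma 2.2]{tanaka1979stochastic} to the pair of reflected processes, with the Brownian increment $\sqrt\varepsilon(B_r-B_s)$ playing the role of the penalisation error, and conclude via Cauchy--Schwarz and Gr\"onwall. Jensen's inequality then gives $\mathbb{E}\sup|Y^{\varepsilon,\alpha}-Y^{0,\alpha}| \leq C\varepsilon^{1/4}$.

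For the upper bound, I would pick a deterministic $\alpha^0$ with $\|\alpha^0\|_{L^2}^2 \leq 2M$ that is $\varepsilon^{1/4}$-optimal for $V^0_\Phi(s,x)$, plug it into \eqref{eq:backwardVerif}, and use the Lipschitz continuity of $h$ (hence of $|h|^2$ on $\overline\stateSpace$) together with the $C^1$ smoothness of $\Phi$ near $\overline\stateSpace$ and the coupling estimate to bound the extra terms by $C\varepsilon^{1/4}$. For the lower bound, I would symmetrically select an adapted $\alpha^\varepsilon$ that is $\varepsilon^{1/4}$-optimal for $V^\varepsilon_\Phi(s,x)$ with $\mathbb{E}\!\int_s^t |\alpha^\varepsilon_r|^2 \dd r \leq 2M$, use it pathwise in $\omega$ as a (deterministic) admissible control for the deterministic problem, take expectation, and compare the resulting costs using once more the coupling estimate. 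Taking the supremum over $(s,x)\in[0,t]\times\overline\stateSpace$ concludes.

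The main technical hurdle is the coupling estimate: two reflected processes driven by the same control but with different noises interact subtly through their local times on $\partial\stateSpace$, so a direct Gr\"onwall argument is unavailable. The use of \cite[Lemma 2.2]{tanaka1979stochastic} loses a square root compared to the classical unconstrained bound $\mathbb{E}\sup|Y^{\varepsilon,\alpha}-Y^{0,\alpha}|^2 \leq C\varepsilon$, and this loss is precisely what dictates the $\varepsilon^{1/4}$ rate of the conclusion, in contrast with the $\varepsilon^{1/2}$ rate of Theorem \ref{thm:flemSmallNoise} in the penalised setting.
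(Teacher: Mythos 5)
Your proposal follows essentially the same route as the paper: uniform boundedness of $V^\varepsilon_\Phi$ via $\alpha\equiv 0$, restriction to controls with square $L^2$-norm at most $2M$, a coupling estimate $\mathbb{E}\bigl[\sup_{s\le r\le t}|Y^{\varepsilon,\alpha}_r - Y^{0,\alpha}_r|^2\bigr]\le C_M\varepsilon^{1/2}$, and then insertion of near-optimal controls into \eqref{eq:backwardVerif} using the Lipschitz continuity of $\Phi$ and $h$ together with Cauchy--Schwarz, which is exactly where the $\varepsilon^{1/4}$ rate comes from. The only difference is that the paper obtains the coupling estimate directly from \cite[Proposition 4.16-II]{pardoux2014sdes} rather than re-deriving it via \cite[Lemma 2.2]{tanaka1979stochastic}, but your proposed derivation is the standard argument behind that result.
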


\begin{proof} 
Since $G$ is bounded, choosing e.g. $\alpha = 0$ in \eqref{eq:backwardVerif}, we get that $V^\varepsilon_\Phi$ is uniformly bounded by some $M >0$ independent of $\varepsilon$.
Thus, we can restrict the minimisation to control processes $( \alpha_r )_{s \leq r \leq t}$ with square $L^2$-norm lower than $2 M$. 
From \cite[Proposition 4.16-I]{pardoux2014sdes} we get that
\[ \E \big[ \sup_{s \leq r \leq t} \vert Y^{\varepsilon,\alpha}_r \vert^2 \big] \leq C_M, \] 
and from \cite[Proposition 4.16-II]{pardoux2014sdes} that
\[ \E \big[ \sup_{s \leq r \leq t} \vert Y^{\varepsilon,\alpha}_r - Y^{0,\alpha}_r \vert^2 \big] \leq C_M \varepsilon^{1/2}, \]
for $C_M > 0$ independent of $\varepsilon$. 
We then plug this into the minimisation \eqref{eq:backwardVerif}: since $\Phi$ and $h$ are Lipschitz-continuous, we conclude by using the Cauchy-Schwarz inequality for the $\frac{1}{2} \vert \dot{y}(r) - h(r,Y^{\varepsilon,\alpha}_r) \vert^2$ term.
\end{proof}

\begin{proof}[Proof of Theorem \ref{thm:refLaplace}]
Going back to \eqref{eq:duality}, the assumption on $q^\varepsilon_0 (x) = \tilde{q}^\varepsilon(0,x)$ and Lemma \ref{lem:RefQuantCV} give that
\[ -\varepsilon \log \int_\stateSpace e^{-\Phi(x)/\varepsilon} q^\varepsilon(t,x) \dd x = -\varepsilon \log \int_\stateSpace e^{-[V^0_\Phi(0,x) + \psi(x) + r^\varepsilon(x)]/\varepsilon} \dd x, \]
for some continuous $r^\varepsilon$ that uniformly converges to $0$ as $\varepsilon \rightarrow 0$. 
The standard Laplace principle (see e.g. \cite[Lemma 6.1]{barasjames}) now shows that the r.h.s. converges towards $\inf_{x \in \overline{\stateSpace}} V^0_\Phi(0,x) + \psi(x)$.
However,
\begin{align*}
\inf_{x \in \overline{\stateSpace}} \, &V^0_\Phi(0,x) + \psi(x) \\
&= \inf_{x_\modelNoise(0), \modelNoise} \Phi(x_\modelNoise(t)) + \psi(x_\modelNoise(0)) + \int_0^t \frac{1}{2} \vert \omega(s) \vert^2 + \frac{1}{2} \vert \dot{y}(s) - h(s,x_\modelNoise(s)) \vert^2 \dd s \\
&= \inf_{x \in \overline{\stateSpace}} \Phi(x) + \costcome(t,x),
\end{align*}
concluding the proof.
\end{proof}

\section*{Acknowledgements}
\addcontentsline{toc}{section}{Acknowledgement}

Philippe Moireau is thankful for support through ANR ODISSE (ANR-19-CE48-0004-01). Laurent Mertz is thankful for support through NSFC Grant No. 12271364 and GRF Grant No. 11302823. 

\printbibliography
\addcontentsline{toc}{section}{References}

\end{document}